\numberwithin{equation}{section}
\newtheorem{theorem}{Theorem}[section]
\newtheorem{lemma}[theorem]{Lemma}
\newtheorem{proposition}[theorem]{Proposition}
\newtheorem{corollary}[theorem]{Corollary}
\theoremstyle{definition}
\newtheorem{definition}[theorem]{Definition}
\newtheorem{def-prop}[theorem]{Definition-Proposition}
\newtheorem{remark}[theorem]{Remark}
\newtheorem{example}[theorem]{Example}
\newtheorem{notation}[theorem]{Notation}
\newtheorem*{Mysketch}{Sketch of proof} 
  {\pushQED{\qed}\begin{Mysketch}}
  {\popQED\end{Mysketch}}
\DeclareMathOperator{\reg}{reg}
\DeclareMathOperator{\Ass}{Ass}
\DeclareMathOperator{\supp}{supp}
\newcommand{\R}{\mathcal{R}}
\newcommand{\BR}{{\Big\rfloor}}
\newcommand{\BL}{{\Big\lfloor}}
\newcommand{\PP}{{\mathbb P}}
\newcommand{\ZZ}{{\mathbb Z}}
\newcommand{\NN}{{\mathbb N}}
\newcommand{\cupdot}{\mathbin{\mathaccent\cdot\cup}}
\def\alb {\mathbf {\alpha}}
\def\h {\widetilde{H}}
\def\mm{{\frak m}}
\def\pp{{\frak p}}
\def\R{{\mathcal R}}
\def\a{{\bf a}}
\def\v{{\bf v}}
\def\x{{\bf x}}
\def\1{{\bf 1}}
\def\0{{\bf 0}}
\begin{document}

\title{Symbolic powers of edge ideals of graphs}

\author{Yan Gu}
\address{Soochow University, School of Mathematical Sciences, Suzhou 215006, P.R. China.}
\email{guyan@suda.edu.cn}

\author{Huy T\`ai H\`a}
\address{Tulane University \\ Department of Mathematics \\
6823 St. Charles Ave. \\ New Orleans, LA 70118, USA}
\email{tha@tulane.edu}

\author{Jonathan L. O'Rourke}
\address{Tulane University \\ Department of Mathematics \\
6823 St. Charles Ave. \\ New Orleans, LA 70118, USA}
\email{jorourk2@tulane.edu}

\author{Joseph W. Skelton}
\address{Tulane University \\ Department of Mathematics \\
6823 St. Charles Ave. \\ New Orleans, LA 70118, USA}
\email{jskelton@tulane.edu}

\keywords{symbolic power, regularity, monomial ideal, edge ideal, Waldschmidt constant, resurgence number}
\subjclass[2010]{13D02, 13P20, 13F55}

\begin{abstract} Let $G$ be a graph and let $I = I(G)$ be its edge ideal. When $G$ is unicyclic, we give a decomposition of symbolic powers of $I$ in terms of its ordinary powers. This allows us to explicitly compute the Waldschmidt constant and the resurgence number of $I$. When $G$ is an odd cycle, we explicitly compute the regularity of $I^{(s)}$ for all $s \in \NN$. In doing so, we also give a natural lower bound for the regularity function $\reg I^{(s)}$, for $s \in \NN$, for an arbitrary graph $G$.
\end{abstract}

\maketitle


\section{Introduction} \label{sec.intro}

In this paper, we investigate properties and invariants of symbolic powers of edge ideals of graphs. Let $G$ be a simple graph and let $I = I(G)$ be its edge ideal. For the edge ideal $I$, the symbolic powers of $I$ are defined as follows:
$$I^{(s)} = \bigcap_{\pp \in \Ass(I)} \pp^s.$$

We shall address the following problems:
\begin{enumerate}
\item examine containments between symbolic and ordinary powers of $I$, and compute associated asymptotic invariants, such as the Waldschmidt constant and the resurgence number of $I$; and
\item determining the regularity function of symbolic powers of $I$.
\end{enumerate}
The question of containments between symbolic and ordinary powers as well as the computation of the Waldschmidt contant and the resurgence number for an ideal has been extensively studied in the literature; the list of references is too large to be exhausted, so as examples we refer the interested reader to \cite{Bocci, BH, Dum, GHV, HaHu, HH} and references therein. On the other hand, the regularity of symbolic powers of edge ideals is considerably more difficult to study than that of ordinary powers; while the regularity function for ordinary powers of edge ideals has attracted significant attention in recent years (cf. \cite{AB, ABS, Ba, BBH, Erey1, Erey2, JNS, JS, JS2, JS3, NFY} and references therein), there have been very few works addressing the regularity function for symbolic powers of edge ideals (see \cite{MT, F}).

Our work is inspired by a recent preprint of Janssen, Kamp and Vander Woude \cite{JKV}, which investigates symbolic powers of the edge ideal of an odd cycle, and a general conjecture by N.C. Minh (see, for example, \cite{CHHVbook, MV}), which states that for the edge ideal $I = I(G)$ of any graph and any $s \in \NN$,
\begin{align}
\reg I^{(s)} = \reg I^s. \label{eq.Minh}
\end{align}
Our focus is on the class of \emph{unicyclic} graphs; those are graphs with a unique cycle. If the unique cycle in a unicyclic graph $G$ is even, then $G$ is a bipartite graph, and so by \cite[Theorem 5.9]{SVV}, we know that $I(G)^{(s)} = I(G)^s$ for all $s \in \NN$. Thus, we shall pay particular attention to unicyclic graphs which contain odd cycles.

We shall show that most of main results of \cite{JKV} in fact follow from a rather nice decomposition of symbolic powers of edge ideals of unicyclic graphs. We shall also give nontrivial supportive evidence for the conjectured equality (\ref{eq.Minh}) by computing explicitly the regularity of all symbolic powers of $I(G)$ when $G$ is an odd cycle. Our first main result is stated as follows.

\noindent{\bf Theorem \ref{thm.decomposition}.}
Let $G$ be a unicyclic graph with a unique cycle $C_{2n+1} = (x_1, \dots, x_{2n+1})$, and let $I = I(G)$ be its edge ideal. Let $s \in \NN$ and write $s = k(n+1) + r$ for some $k \in \ZZ$ and $0 \le r \le n$. Then
$$I^{(s)} = \sum_{t=0}^k I^{s-t(n+1)} (x_1 \cdots x_{2n+1})^t.$$

Theorem \ref{thm.decomposition} allows us to quickly recover and extend results of \cite{JKV} on containments between symbolic and ordinary powers, on the Waldschmidt constant and the resurgence number (see Theorem \ref{thm.alpharho}).

To prove Theorem \ref{thm.decomposition}, we make use of results in \cite{FGR, HP, MRV} to show that a unicyclic graph $G$ is implosive, and thus the symbolic Rees algebra of its edge ideal is generated only in degrees 1 and $(n+1)$. It remains to determine the generator(s) of degree $(n+1)$. To this end, we give a direct description of $I(G)^{(n+1)}$ in Lemma \ref{lem.n+1} (in fact, we can also combine results of \cite{FGR, HP, MRV} for this purpose, but our description of $I(G)^{(n+1)}$ in Lemma \ref{lem.n+1} is quite elementary).

We shall also use Theorem \ref{thm.decomposition} to give the first nontrivial supportive evidence for the conjectured equality (\ref{eq.Minh}). More specifically, we prove the following result.

\noindent{\bf Theorem \ref{thm.regsymbolic}.} Let $I = I(C_{2n+1})$ be the edge ideal of an odd cycle. Then, for any $s \ge 1$, we have
$$\reg I^{(s)} = \reg I^s.$$
Particularly, it follows (see Corollary \ref{cor.regsymbolic}) that, for any $s \geq 2$,
$$\reg I^{(s)} = 2s + \BL \dfrac{2n+1}{3}\BR - 1.$$

To prove Theorem \ref{thm.regsymbolic}, we first establish a general lower bound for the regularity of symbolic powers of the edge ideal of any graph.

\noindent{\bf Theorem \ref{thm.genbound}.} Let $G$ be a simple graph with edge ideal $I = I(G)$. Let $\nu(G)$ denote the induced matching number of $G$. Then, for any $s \in \NN$, we have
$$\reg I^{(s)} \ge 2s + \nu(G) -1.$$

Theorem \ref{thm.genbound} is inspired by the general lower bound for the regularity of ordinary powers of edge ideals given in \cite{BHT}. The proof goes in the same line as that of \cite[Theorem 4.5]{BHT} with an additional subtlety in examining upper-Koszul complexes associated to symbolic powers of an edge ideal and that of the edge ideal of an induced subgraph.

The proof of Theorem \ref{thm.regsymbolic} is then completed by establishing the upper bound when $I = I(C_{2n+1})$. To this end, we make use of the decomposition in Theorem \ref{thm.decomposition}.

The paper is outlined as follows. In the next section, we collect notations and terminology. In Section \ref{sec.sym}, we prove our first main result which gives the decomposition for symbolic powers of edge ideals of unicyclic graphs. In Section \ref{sec.genbound}, we establish a general lower bound for the regularity of symbolic powers of edge ideals of graphs. The paper ends with Section \ref{sec.regcycle}, where our next main theorem is proved, giving an explicit form for the regularity of symbolic powers of the edge ideal of an odd cycle.

\noindent{\bf Acknowledgement.} The first author is supported by the National Natural Science Foundation of China (11501397), the Natural Science Foundation of Jiangsu Province (BK20140300), the Jiangsu Government Scholarship for Overseas Studies (JS-2016-209) and the Priority Academic Program Development of Jiangsu Higher Education Institutions. The second author acknowledges partial support from Simons Foundation (grant \#279786) and Louisiana Board of Regents (grant \#LEQSF(2017-19)-ENH-TR-25). This work was done when the first author was visiting the other authors at Tulane University. The authors would like to thank Tulane University for its hospitality. The authors would also like to thank Mike Janssen for pointing out a mistake in the first draft of the paper.

\section{Preliminaries} \label{sec.prel}

In this section, we collect notations and terminology used in the paper.

\noindent{\bf Graph Theory.} Throughout the paper, $G$ will denote a finite simple graph over the vertex set $V(G)$ and the edge set $E(G)$. For a vertex $x \in V(G)$, let $N_G(x) = \{y \in V(G) ~\big|~ \{x,y\} \in E(G)\}$ be its \emph{neighborhood}, and set $N_G[x] = N_G(x) \cup \{x\}$. For a subset of the vertices $W \subseteq V(G)$, $N_G(W)$ and $N_G[W]$ are defined similarly.

A subgraph $G'$ of $G$ is called an \emph{induced} subgraph of $G$ if for any vertices $x,y\in V(G')$, $\{x,y\} \in E(G') \Longleftrightarrow \{x,y\} \in E(G)$. For a collection of the vertices $W \subseteq V(G)$, we shall denote by $G[W]$ the induced subgraph of $G$ on $W$, and denote by $G-W$ the induced subgraph of $G$ on $V(G) \setminus W$.

\begin{definition} Let $G$ be a graph.
\begin{enumerate}
\item A \emph{cycle} in $G$ is a sequence of distinct vertices $x_1, \dots, x_n$ such that $\{x_i,x_{i+1}\}$ is an edge for all $i = 1, \dots, n$ (here $x_{n+1} \equiv x_1$).
\item A cycle consisting of $n$ distinct vertices is called an \emph{$n$-cycle} and often denoted by $C_n.$ We shall also use $C_n = (x_1, \dots, x_n)$ to denote the $n$-cycle whose sequence of vertices is $x_1, \dots, x_n$.
\end{enumerate}
\end{definition}

\begin{definition} Let $G$ be a graph.
\begin{enumerate}
\item A \emph{matching} in $G$ is a collection of disjoint edges. The \emph{matching number} of $G$, denoted by $\beta(G)$, is the maximum size of a matching in $G$.
\item An \emph{induced matching} in $G$ is a matching $C$ such that the induced subgraph of $G$ over the vertices in $C$ does not contain any edge other than those already in $C$. The \emph{induced matching number} of $G$, denoted by $\nu(G)$, is the maximum size of an induced matching in $G$.
\end{enumerate}
\end{definition}

\begin{definition} Let $G$ be a graph.
\begin{enumerate}
\item A collection of the vertices $W\subseteq V(G)$ is called a \emph{vertex cover} if for any edge $e \in E(G)$, $W \cap e \not= \emptyset$. A vertex cover is called \emph{minimal} if no proper subset of it is also a vertex cover.
\item The \emph{vertex cover number} of $G$, denoted by $\tau(G)$, is the smallest size of a minimal vertex cover in $G$.
\item The graph $G$ is called \emph{decomposable} if there is a proper partition of its vertices $V(G) = \cupdot_{i=1}^r V_i$ such that $\tau(G) = \sum_{i=1}^r \tau(G[V_i])$. In this case $(G[V_1], \dots, G[V_r])$ is called a \emph{decomposition} of $G$. If $G$ is not decomposable then $G$ is said to be \emph{indecomposable}.
\end{enumerate}
\end{definition}

\begin{definition} Let $G$ be a graph and let $\v = (v_1, \dots, v_{|V(G)|}) \in \NN^{|V(G)|}$.
\begin{enumerate}
\item The \emph{duplication of a vertex} $x \in V(G)$ in $G$ is the graph obtained from $G$ by adding a new vertex $x'$ and all edges $\{x',y\}$ for $y \in N_G(x)$.
\item The \emph{parallelization} of $G$ with respect to $\v$, denoted by $G^\v$, is the graph obtained from $G$ by deleting the vertex $x_i$ if $v_i = 0$, and duplicating $v_i-1$ times the vertex $x_i$ if $v_i \not= 0$.
\end{enumerate}
\end{definition}

\noindent{\bf Algebra-Combinatorics Correspondences.} Let $G$ be a graph over the vertex set $V(G) = \{x_1, \dots, x_m\}$. Let $\mathbb{K}$ be an arbitrary infinite field, and let $R = \mathbb{K}[x_1, \dots, x_m]$ be the polynomial ring associated to $V(G)$.

\begin{definition} The \emph{edge ideal} of $G$ is defined to be
$$I(G) = \langle xy ~\big|~ \{x,y\} \in E(G)\rangle \subseteq R.$$
\end{definition}

For obvious reasons, we shall often abuse notation and write $xy$ for both the edge $\{x,y\} \in E(G)$ and the monomial $xy \in R$.

A commonly-used method in commutative algebra when investigating (symbolic) powers of an ideal is to consider its (symbolic) Rees algebra.

\begin{definition} Let $I \subseteq R$ be an ideal. The \emph{Rees algebra}, denoted by $\R(I)$, and the \emph{symbolic} Rees algebra, denoted by $\R_s(I)$, of $I$ are defined to be
$$\R(I) := \bigoplus_{n \ge 0} I^nt^n \subseteq R[t] \text{ and } \R_s(I) := \bigoplus_{n \ge 0} I^{(n)}t^n \subseteq R[t].$$
\end{definition}

While the Rees algebra of an ideal is always finitely generated, this is not the case in general for the symbolic Rees algebra (see for example, \cite{R}). It is, however, known that if $I$ is a monomial ideal in a polynomial ring $R$, then the symbolic Rees algebra $\R_s(I)$ is a finitely generated algebra over $R$ (see \cite{HHT, L}). Particularly, if $I = I(G)$ is the edge ideal of a graph then the generators of $\R_s(I)$ can be described by indecomposable graphs arising from $G$. The following characterization for $\R_s(I(G))$ was given in \cite{MRV}.

\begin{theorem} \label{thm.symbolicRees}
Let $G$ be a graph over the vertex set $V(G) = \{x_1, \dots, x_m\}$. Let $I = I(G)$ be its edge ideal. Then
$$\R_s(I) = \mathbb{K}[x^\v t^b ~\big|~ G^\v \text{ is an indecomposable graph and } b = \tau(G^\v)],$$
where for $\v = (v_1, \dots, v_m)$, $x^\v t^b = x_1^{v_1} \cdots x_m^{v_m} t^b$.
\end{theorem}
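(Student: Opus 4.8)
The plan is to reduce the statement to a combinatorial description of the monomials in the symbolic powers $I^{(s)}$, and then to extract the generators of $\R_s(I)$ by an induction on exponent vectors. The first step is to establish, for every $\v\in\NN^m$ and every $s\ge 0$, the equivalence $x^\v\in I^{(s)}\Longleftrightarrow \tau(G^\v)\ge s$. Since $I=I(G)$ is a squarefree monomial ideal, $\Ass(I)=\Min(I)$ consists precisely of the primes $\pp_C=(x_i : x_i\in C)$ as $C$ runs over the minimal vertex covers of $G$, so $I^{(s)}=\bigcap_C \pp_C^s$; and a monomial $x^\v$ lies in $\pp_C^s$ exactly when $\sum_{x_i\in C}v_i\ge s$. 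Hence $x^\v\in I^{(s)}$ if and only if $\min_C\sum_{x_i\in C}v_i\ge s$, the minimum ranging over all vertex covers of $G$. It then remains to identify this minimum with $\tau(G^\v)$: a minimum-size vertex cover of the parallelization $G^\v$ may be chosen to consist of entire clone-classes, hence is of the form ``all clones of the vertices of $W$'' for a vertex cover $W$ of $G[\supp\v]$, and has size $\sum_{x_i\in W}v_i$, while conversely any such $W$ yields a cover of $G^\v$ of that size. I would prove this clone-class normalization by a local exchange argument: if some clone of $x_i$ is omitted from a vertex cover of $G^\v$, then every clone of every $G$-neighbour of $x_i$ must belong to that cover.

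Granting this dictionary, $\R_s(I)=\bigoplus_{s\ge 0}I^{(s)}t^s$ is spanned over $\mathbb{K}$ by the monomials $x^\v t^s$ with $0\le s\le\tau(G^\v)$, and it suffices to show that every such monomial lies in the subalgebra $S:=\mathbb{K}[\,x^\v t^{\tau(G^\v)} : G^\v \text{ indecomposable}\,]$ (the reverse inclusion $S\subseteq\R_s(I)$ being immediate from the dictionary). Observe first that $R\subseteq S$, because each one-vertex parallelization $G^{\mathbf{e}_i}$ is vacuously indecomposable with $\tau=0$, contributing the generator $x_i$. I would then induct on $|\v|=v_1+\cdots+v_m$. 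If $s=0$ the monomial lies in $R\subseteq S$. If $1\le s<\tau(G^\v)$, pick $i\in\supp\v$ (nonempty since $\tau(G^\v)\ge 1$) and write $x^\v t^s=x_i\cdot\big(x^{\v-\mathbf{e}_i}t^s\big)$; deleting a single clone lowers the vertex cover number by at most one, so $\tau(G^{\v-\mathbf{e}_i})\ge\tau(G^\v)-1\ge s$, and the second factor lies in $S$ by induction. If $s=\tau(G^\v)$ and $G^\v$ is indecomposable, then $x^\v t^s$ is itself one of the listed generators. If $s=\tau(G^\v)$ and $G^\v$ is decomposable, a decomposition $V(G^\v)=\cupdot_{j=1}^r V_j$ with $r\ge 2$ induces a splitting $\v=\sum_{j=1}^r\v^{(j)}$, where $v^{(j)}_i$ counts the clones of $x_i$ lying in $V_j$; one checks that $G^{\v^{(j)}}\cong G^\v[V_j]$, whence $\sum_j\tau(G^{\v^{(j)}})=\tau(G^\v)=s$ and $x^\v t^s=\prod_{j=1}^r\big(x^{\v^{(j)}}t^{\tau(G^{\v^{(j)}})}\big)$ factors over strictly smaller exponent vectors, each factor being in $S$ by induction. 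This completes the induction and yields $\R_s(I)=S$.

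I expect the only genuine difficulty to be the combinatorial identity $\min_C\sum_{x_i\in C}v_i=\tau(G^\v)$ — that is, the fact that minimum vertex covers of a blow-up of $G$ can be normalized to respect clone-classes — which is the crux; the remainder is bookkeeping. A secondary point deserving some care is the compatibility $G^{\v^{(j)}}\cong G^\v[V_j]$ of parallelization with decomposition, in particular its behaviour when a coordinate $v^{(j)}_i$ vanishes, in which case $x_i$ is simply absent from both sides.
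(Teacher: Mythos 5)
Your argument is correct, but note that there is nothing in the paper to compare it with: Theorem \ref{thm.symbolicRees} is stated as a quoted result of Mart\'inez-Bernal--Renter\'ia--Villarreal \cite{MRV}, and the paper gives no proof of its own. What you have written is essentially the standard proof from the literature, made self-contained: the dictionary $x^\v\in I^{(s)}\Longleftrightarrow \tau(G^\v)\ge s$, obtained by identifying $I^{(s)}=\bigcap_C \pp_C^s$ over minimal vertex covers and then identifying the minimum $\v$-weight of a vertex cover of $G$ with $\tau(G^\v)$, followed by the induction on $|\v|$ that peels off a variable when $s<\tau(G^\v)$ and splits along a decomposition when $s=\tau(G^\v)$ and $G^\v$ is decomposable. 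All the steps check out: deleting a clone drops $\tau$ by at most one; a decomposition of $G^\v$ does induce the splitting $\v=\sum_j\v^{(j)}$ with $G^{\v^{(j)}}\cong G^\v[V_j]$ (clone classes are independent sets joined completely to clone classes of neighbours, so induced subgraphs of parallelizations are parallelizations); and single vertices are indecomposable under the paper's definition, so the variables $x_i=x^{\e_i}t^0$ are indeed among the listed generators, giving $R\subseteq S$. Two small points deserve one extra line each in a written-up version: (i) your exchange argument shows that if one clone of $x_i$ is missing from a cover then all clones of its neighbours are present, but you should add that the remaining clones of $x_i$ in the cover are then redundant and can be discarded, which is what forces a \emph{minimum} cover to respect clone classes; and (ii) the identity $\min_C\sum_{x_i\in C}v_i=\tau(G^\v)$ compares covers of $G$ with covers of $G[\supp\v]$, which match up because vertices outside $\supp\v$ carry weight zero and can be added to or removed from a cover freely. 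With those remarks included, your proof is a complete and valid replacement for the citation.
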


A particular class of graphs of our interest consists of graphs $G$ for which the minimal generators of $\R_s(I)$ are squarefree monomials.

\begin{definition} Let $G$ be a simple graph with edge ideal $I = I(G)$. The graph $G$ is called an \emph{implosive} graph if the symbolic Rees algebra $\R_s(I)$ of $I$ is generated by monomials of the form $\x^\v t^b$, where $\v \in \{0,1\}^{|V(G)|}$.
\end{definition}

Basic implosive graphs include those that are cycles, as proved in \cite[Theorem 2.3]{FGR}, which we shall now recall.

\begin{theorem} \label{thm.implosivecycle}
If $G$ is a cycle, then $G$ is implosive.
\end{theorem}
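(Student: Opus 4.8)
The plan is to invoke the combinatorial characterization of the symbolic Rees algebra from Theorem \ref{thm.symbolicRees}, which reduces the claim to a purely graph-theoretic statement about parallelizations of a cycle. Concretely, we must show that if $C_n$ is a cycle and $\v = (v_1, \dots, v_n) \in \NN^n$ is such that the parallelization $C_n^\v$ is indecomposable, then in fact $\v \in \{0,1\}^n$; equivalently, every generator $x^\v t^b$ of $\R_s(I(C_n))$ arising from an indecomposable parallelization is squarefree. So the strategy is to prove the contrapositive: if some coordinate $v_i \ge 2$, then $C_n^\v$ is \emph{decomposable}, i.e.\ its vertices admit a proper partition $V(C_n^\v) = \cupdot_{j} V_j$ with $\tau(C_n^\v) = \sum_j \tau(C_n^\v[V_j])$.

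First I would set up the structure of $C_n^\v$: deleting vertices with $v_i = 0$ breaks the cycle into a disjoint union of paths (or leaves the whole cycle, if all $v_i \neq 0$), and each surviving vertex $x_i$ is replaced by $v_i$ pairwise non-adjacent ``clones'' that share the same neighborhood. Thus $C_n^\v$ is obtained from a path (or cycle) by ``blowing up'' vertices into independent sets. Next, suppose $v_i \ge 2$ for some $i$, and consider one clone $x_i'$ of $x_i$, say a clone that is \emph{not} an endpoint-type vertex. The key local observation is that $x_i'$ has exactly the same (at most two) neighbors as every other clone of $x_i$, so I can try to split off the single vertex $x_i'$ as its own block $V_1 = \{x_i'\}$: since $\tau$ of an isolated vertex is $0$, decomposability with this block amounts to showing $\tau(C_n^\v) = \tau(C_n^\v - x_i')$. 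Because another clone $x_i''$ of $x_i$ survives in $C_n^\v - x_i'$ and covers exactly the edges that $x_i'$ covered, any minimal vertex cover of $C_n^\v - x_i'$ is already a vertex cover of $C_n^\v$, giving $\tau(C_n^\v) \le \tau(C_n^\v - x_i')$, and the reverse inequality is automatic since removing a vertex cannot increase $\tau$. Hence $\tau(C_n^\v) = \tau(C_n^\v - x_i')$ and $(\{x_i'\}, C_n^\v - x_i')$ is a genuine decomposition, contradicting indecomposability. One then iterates (or simply concludes directly) that an indecomposable parallelization of a cycle must have all $v_i \in \{0,1\}$.

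The main obstacle I anticipate is handling the degenerate cases cleanly — in particular when $n$ is small, when the clone $x_i'$ is adjacent to a deleted-vertex boundary so that it becomes a leaf or an isolated vertex, and ensuring the partition used is genuinely \emph{proper} (both blocks nonempty, which is guaranteed precisely because $v_i \ge 2$ leaves a second clone behind). A secondary subtlety is verifying the equality $\tau(C_n^\v) = \tau(C_n^\v - x_i')$ rigorously via the ``duplicated neighborhood'' argument rather than by an ad hoc count: the cleanest formulation is that duplicating a vertex never changes $\tau$, and then reading off that a parallelization with a repeated clone is decomposable. Once that lemma-level fact is in hand, the theorem follows immediately from Theorem \ref{thm.symbolicRees} and the definition of implosive.
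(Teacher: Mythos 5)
Your reduction via Theorem \ref{thm.symbolicRees} is a reasonable starting point, but the key step of your argument fails. The claim that a minimum vertex cover of $C_n^\v - x_i'$ is automatically a vertex cover of $C_n^\v$ is false: such a cover may contain the surviving clone $x_i''$ rather than all of the common neighborhood of the clones, and then the edges at $x_i'$ are left uncovered. Consequently the asserted equality $\tau(C_n^\v) = \tau(C_n^\v - x_i')$, and the stronger ``lemma-level fact'' that duplicating a vertex never changes $\tau$, are both wrong. Concretely, duplicating the middle vertex of a path on three vertices produces $C_4$ and raises $\tau$ from $1$ to $2$; inside your setting this occurs for $\v = (0,1,2,1,0)$ on $C_5$, where $C_5^\v \cong C_4$ has $\tau = 2$ while deleting the clone leaves a path with $\tau = 1$, so $(\{x_3'\},\, C_5^\v - x_3')$ is not a decomposition. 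The failure is not confined to degenerate vectors with zero entries: for $\v = (2,2,2)$ on $C_3$ one gets $C_3^\v \cong K_{2,2,2}$ with $\tau = 4$, while removing any clone leaves a graph with $\tau = 3$, so no single-clone split is a decomposition there either.

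These graphs are nevertheless decomposable ($C_4$ splits into two disjoint edges, $K_{2,2,2}$ into two disjoint triangles realizing $\tau$ additively), so the statement you are aiming for --- that an indecomposable parallelization of a cycle must have $\v \in \{0,1\}^{|V(G)|}$ --- is not contradicted; but proving it requires a genuinely different, more global choice of partition (for instance into induced edges and odd cycles whose cover numbers add up to $\tau(C_n^\v)$), or an appeal to K\"onig-type arguments on the bipartite pieces together with a separate analysis of parallelizations containing the odd cycle. There is also a smaller logical point: implosivity only requires that the symbolic Rees algebra be \emph{generated} by squarefree elements, so one may either show non-squarefree indecomposable parallelizations do not exist (your stronger goal) or that the corresponding generators are redundant; your text conflates the two. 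Note finally that the paper itself does not prove this theorem but quotes it from \cite{FGR}; the local single-clone argument you propose does not substitute for the work done there.
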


New implosive graphs can be constructed from old ones by the following construction.

\begin{definition} Let $G_1$ and $G_2$ be graphs. Suppose that $G_1 \cap G_2 = K_r$ is the complete graph of order $r$, where $G_1 \not= K_r$ and $G_2 \not= K_r$. Then, $G_1 \cup G_2$ is called the \emph{clique-sum} of $G_1$ and $G_2$.
\end{definition}

Particularly, we have the following result from \cite[Theorem 2.5]{FGR}.

\begin{theorem} \label{thm.cliquesum}
The clique-sum of implosive graphs is again implosive.
\end{theorem}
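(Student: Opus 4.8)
The plan is to reduce the statement, via Theorem~\ref{thm.symbolicRees}, to a combinatorial assertion about parallelizations, and then to localize the whole difficulty at the glueing clique. Write $G = G_1 \cup G_2$ with $G_1 \cap G_2 = K_r$. By Theorem~\ref{thm.symbolicRees} it suffices to show that $\R_s(I(G))$ is generated by the monomials $x^\v t^b$ with $\v \in \{0,1\}^{|V(G)|}$. Since every such monomial automatically lies in $\R_s(I(G))$, the task becomes: every pair $(\v,b)$ with $x^\v \in I(G)^{(b)}$ -- equivalently, with $\v(C) := \sum_{x_i \in C} v_i \ge b$ for every vertex cover $C$ of $G$ -- is a product of such $0$-$1$ monomials. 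I would induct on $|\v| = \sum_i v_i$, the case $\v \in \{0,1\}^{|V(G)|}$ being vacuous.

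The structural input I would use is that, because $E(G) = E(G_1)\sqcup E(G_2)$ and $V(G_1)\cap V(G_2) = V(K_r)$, a set $C\subseteq V(G)$ is a vertex cover of $G$ precisely when $C\cap V(G_1)$ and $C\cap V(G_2)$ are vertex covers of $G_1$ and $G_2$; these restrictions automatically agree on $V(K_r)$, and conversely any vertex covers of $G_1$ and $G_2$ that agree on $V(K_r)$ glue to one of $G$. It follows that a monomial supported on $V(G_i)$ lies in $\R_s(I(G))$ iff it lies in $\R_s(I(G_i))$, and that $\R_s(I(G_1))$ and $\R_s(I(G_2))$ sit inside $\R_s(I(G))$. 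The heart of the proof is then to produce, for a given $(\v,b)$ in $\R_s(I(G))$, a splitting $\v = \v^{(1)} + \v^{(2)}$ with $\supp \v^{(i)}\subseteq V(G_i)$ and a splitting $b = b_1 + b_2$ with $x^{\v^{(i)}}\in I(G_i)^{(b_i)}$: granting this, the induction closes by applying the implosivity of $G_1$ and $G_2$ to $(\v^{(1)},b_1)$ and $(\v^{(2)},b_2)$ (each strictly smaller than $(\v,b)$ unless $\v$ is already supported on a single $V(G_i)$, in which case one applies implosivity of that $G_i$ to $(\v,b)$ directly). The freedom, and the whole difficulty, lies in how the \emph{clique} coordinates of $\v$ are apportioned between $\v^{(1)}$ and $\v^{(2)}$: one wants the split for which minimum-weight vertex covers on the two sides can be arranged to agree on $V(K_r)$, since glueing them then exhibits $b_1 + b_2$ as the weight of an actual vertex cover of $G$, hence $\ge b$. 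That $K_r$ is itself implosive, with the covering number of a parallelization $K_r^{\mathbf w}$ equal to $\sum_i w_i - \max_i w_i$, is what should make such a compatible split available.

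I expect the apportionment of the clique coordinates to be the main obstacle: naively pushing all of $\v|_{V(K_r)}$ to one side forces the estimate $b_1 + b_2 \le b$ -- the wrong direction -- so one genuinely needs an exchange/rounding argument on the clique. The phenomenon to keep in mind is already visible for $G = K_{r+2}-e = K_{r+1}\cup_{K_r}K_{r+1}$: duplicating a clique vertex can raise $\tau$, yet the resulting parallelization is still decomposable, via exactly the partition $(V(G_1), V(G_2))$ realizing the clique-sum, after moving the extra copy of the clique vertex to one side. This suggests a second, purely combinatorial route: show directly that any parallelization $G^\v$ with $\v \notin \{0,1\}^{|V(G)|}$ is decomposable, by realizing $G^\v$ as a generalized clique-sum of the parallelizations of $G_1$ and $G_2$ glued along the parallelization of $K_r$, and then peeling off the appropriate side (again using implosivity of $G_1$ and $G_2$). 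Either way, all the real content sits at the clique $K_r$ and the rest is bookkeeping; in the paper itself it of course suffices to cite \cite[Theorem~2.5]{FGR}.
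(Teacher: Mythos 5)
The paper itself does not prove Theorem \ref{thm.cliquesum}: it is quoted from \cite[Theorem 2.5]{FGR}, so the question is whether your sketch would stand on its own. The parts you actually carry out are correct: a set is a vertex cover of the clique-sum $G$ exactly when its restrictions to $V(G_1)$ and $V(G_2)$ are vertex covers of $G_1$ and $G_2$; consequently, for a monomial $x^{\mathbf{v}}t^b$ with $\supp\mathbf{v}\subseteq V(G_i)$, membership in $\mathcal{R}_s(I(G))$ and in $\mathcal{R}_s(I(G_i))$ coincide, and $\mathcal{R}_s(I(G_i))\subseteq\mathcal{R}_s(I(G))$; and, granting your splitting claim, implosivity of $G$ does follow from implosivity of $G_1$ and $G_2$ (at that point no induction on $|\mathbf{v}|$ is even needed). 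A small slip: $E(G)=E(G_1)\sqcup E(G_2)$ is false for $r\ge 2$, since the edges of $K_r$ lie in both $G_1$ and $G_2$; only $E(G)=E(G_1)\cup E(G_2)$ is used, so this is harmless.

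The genuine gap is that the splitting claim --- the existence of $\mathbf{v}=\mathbf{v}^{(1)}+\mathbf{v}^{(2)}$ with $\supp\mathbf{v}^{(i)}\subseteq V(G_i)$ and $\tau(G_1^{\mathbf{v}^{(1)}})+\tau(G_2^{\mathbf{v}^{(2)}})\ge\tau(G^{\mathbf{v}})$ (the reverse inequality is automatic by superadditivity, so the only real freedom is how $\mathbf{v}|_{V(K_r)}$ is divided) --- is precisely the content of the theorem, and you do not prove it. You explicitly defer it (``one genuinely needs an exchange/rounding argument on the clique''), offering as evidence only the formula $\tau(K_r^{\mathbf{w}})=\sum_i w_i-\max_i w_i$ and the example $K_{r+2}-e$; the alternative route you mention, showing directly that every parallelization $G^{\mathbf{v}}$ with $\mathbf{v}\notin\{0,1\}^{|V(G)|}$ is decomposable, is likewise described but not executed, and it is exactly where \cite{FGR} invests its work, using the structure theory of indecomposable graphs of \cite{HP}. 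Beware also that the tempting shortcut ``a graph with a clique cutset is decomposable'' does not apply verbatim here: duplicated copies of a clique vertex are nonadjacent, so the separator in $G^{\mathbf{v}}$ is complete multipartite rather than a clique, and handling such duplicated clique vertices is precisely what your unproven rounding step must do. As written, the proposal is a correct reduction plus an unproven key lemma, so it is not yet a proof --- though, as you note, for the purposes of this paper the citation of \cite[Theorem 2.5]{FGR} suffices.
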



\section{Symbolic powers of unicyclic graphs} \label{sec.sym}

In this section, we investigate symbolic powers of edge ideals of unicyclic graphs. Specifically, for a unicyclic graph $G$, we shall give a nice decomposition of the symbolic powers of $I(G)$ in terms of its ordinary powers. We shall use this decomposition to compute the Waldschmidt constant and the resurgence number of $I(G)$.

We start by the following simple observations.

\begin{lemma} \label{lem.split}
Let $I \subseteq R$ be a squarefree monomial ideal and let $y$ be a variable in $R$. Write $I = J + yH$, where $J$ and $H$ are monomial ideals and $y$ does not divide any minimal generators in $J$. Then
$$I = (I:y) \cap (J,y).$$
\end{lemma}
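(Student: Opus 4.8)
The plan is to prove the two inclusions separately, the first being essentially formal and the second being where the hypotheses do their work. For the easy direction, $I \subseteq (I:y)$ holds for any ideal, and $I = J + yH \subseteq (J,y)$ since every minimal generator of $I$ either already lies in $J$ or is a multiple of $y$; hence $I \subseteq (I:y)\cap(J,y)$.

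For the reverse inclusion I would use that $I$, $J$, $H$, and consequently $(I:y)$, $(J,y)$ and their intersection are all monomial ideals, so it suffices to show that every monomial $m \in (I:y)\cap(J,y)$ lies in $I$. Fix such an $m$. Since $m \in (J,y)$, there are two cases. If some minimal generator of $J$ divides $m$, then $m \in J \subseteq I$ and we are done. Otherwise $y \mid m$, and I would write $m = y^a m'$ with $a \ge 1$ and $y \nmid m'$. Now invoke the second hypothesis: $m \in (I:y)$ means $y^{a+1}m' = ym \in I$, so this monomial is divisible by some minimal generator $g$ of $I$. Here is the crucial point: because $I$ is \emph{squarefree}, the exponent of $y$ in $g$ is either $0$ or $1$, so I can write $g = y^{e}g'$ with $e \in \{0,1\}$ and $y \nmid g'$. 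Comparing the $y$-free parts of $g \mid y^{a+1}m'$ gives $g' \mid m'$, whence $g' \mid y^{a}m' = m$; and since $a \ge 1$, the remaining factor $y^{e}$ (in the case $e=1$) is absorbed by $m$, so $g \mid m$ and therefore $m \in I$. This settles $(I:y)\cap(J,y) \subseteq I$ and completes the proof.

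The only genuinely delicate step is this last one, and I expect it to be the main obstacle: one must track $y$-exponents carefully and see precisely where squarefreeness of $I$ enters — namely in bounding the $y$-exponent of a minimal generator of $I$ by $1$, which is exactly what lets the single factor of $y$ sitting in $m$ swallow it. Without that bound the argument genuinely fails (and so does the statement, as $I = (xy^2)$ shows), so the write-up should isolate and flag this use of the hypothesis rather than let it pass silently.
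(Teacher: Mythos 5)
Your proof is correct and follows essentially the same route as the paper: both directions are handled identically, with the reverse inclusion reduced to a monomial $m$ and split into the case where a generator of $J$ divides $m$ versus the case $y \mid m$, where squarefreeness of $I$ forces the $y$-exponent of a minimal generator dividing $ym$ to be at most $1$ and hence $m \in I$. Your write-up merely makes explicit the divisibility bookkeeping that the paper compresses into the phrase ``since $I$ is a squarefree monomial ideal, we must have $M = yN \in I$.''
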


\begin{proof} Clearly, $I \subseteq I:y$ and $I \subseteq (J,y)$. Thus, $I \subseteq (I:y) \cap (J,y)$.

Now, consider any monomial $M \in (I:y) \cap (J,y)$. If $M$ is not divisible by $y$, then since $M \in (J,y)$, we have $M \in J \subseteq I$. On the other hand, if $M = yN$, for a monomial $N$, then since $M \in (I:y)$, i.e., $y^2N = yM \in I$ and $I$ is a squarefree monomial ideal, we must have $M = yN \in I$. Therefore, $(I:y) \cap (J,y) \subseteq I$, and the equality is proved.
\end{proof}

\begin{lemma} \label{lem.intersection}
Let $G$ be a unicyclic graph with a unique cycle $C_{2n+1}$. Let $xy$ be a leaf of $G$, where $y$ is a leaf vertex. Then for all $1 \le s \le n+1$, we have
$$(I(G-N_G[y]),x)^s \cap (I(G-y),y)^s = (I(G-y),xy)^s = I(G)^s.$$
\end{lemma}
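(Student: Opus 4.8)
The plan is to settle the two easy equalities and then prove the substantive containment by a monomial computation.

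\emph{The easy equalities.} Since $y$ is a leaf vertex, $xy$ is its only edge, so $I(G)=I(G-y)+(xy)=(I(G-y),xy)$, which gives $(I(G-y),xy)^s=I(G)^s$. For the first equality, write $I(G)=I(G-y)+y\cdot(x)$; here $y$ divides no minimal generator of $I(G-y)$, and the minimal generators of $I(G-y)$ are the edges of $G$ disjoint from $\{x,y\}$ (which generate $I(G-N_G[y])$) together with the edges $xz$, $z\in N_G(x)\setminus\{y\}$ (which lie in $(x)$), so $(I(G):y)=I(G-y)+(x)=(I(G-N_G[y]),x)$. Hence Lemma~\ref{lem.split} yields $I(G)=(I(G-N_G[y]),x)\cap(I(G-y),y)$, and therefore $I(G)^s\subseteq(I(G-N_G[y]),x)^s\cap(I(G-y),y)^s$. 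It remains to prove the reverse inclusion.

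\emph{Reduction to a monomial question.} Set $A=(I(G-N_G[y]),x)$, $B=(I(G-y),y)$, $J=I(G-N_G[y])$ and $P=(N_G(x)\setminus\{y\})$, the latter two regarded as ideals of the polynomial ring on $V(G)\setminus\{x,y\}$; note that $I(G-y)=J+xP$. Let $M$ be a monomial in $A^s\cap B^s$ and write $M=x^ay^bM''$ with $M''$ coprime to $xy$. A comparison of monomial generators gives: $M\in A^s$ iff $M''\in J^{\max(s-a,0)}$ (with $J^0$ the whole ring); $M\in B^s$ iff $b\ge s$ or $M''\in P^{\mu}J^{(s-b)-\mu}$ for some $0\le\mu\le\min(a,s-b)$; and $M\in I(G)^s$ iff $M''\in P^{\mu}J^{\gamma}$ for some $\alpha,\mu,\gamma\ge0$ with $\alpha+\mu+\gamma=s$, $\alpha+\mu\le a$ and $\alpha\le b$, where $\alpha,\mu,\gamma$ count the copies of the edges $xy$, the edges $xz$ with $z\in N_G(x)\setminus\{y\}$, and the edges disjoint from $\{x,y\}$ in an $s$-fold product of edges of $G$ dividing $M$. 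The task is thus to manufacture such a triple $(\alpha,\mu,\gamma)$ from the data on $M''$.

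\emph{The easy regime and the hard case.} If $b\ge s$, or $a\ge s$, or $a\le b$, or the index $\mu$ for $M\in B^s$ can be taken $\le a-b$, the triple is written down at once: retain the edges supplied by the $B$- (or $A$-) membership and pad with copies of $xy$, using that $M$ has $a$ factors of $x$ and $b$ of $y$; the inequalities $\alpha+\mu\le a$ and $\alpha\le b$ then hold by counting. This leaves the case $b<a<s$ in which every admissible $\mu$ satisfies $\mu_0:=\mu>a-b$; put $\gamma_0:=(s-b)-\mu_0<s-a=:t$, so that it suffices to show $M''\in P^{\mu_0}J^{t}$, given $M''\in P^{\mu_0}J^{\gamma_0}$ and $M''\in J^{t}$. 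Since $b<a$ forces $a\ge1$, we have $t=s-a\le n$; moreover $G-N_G[y]$ is a forest (if $x$ lies on the cycle) or is again unicyclic with the same odd cycle (if not), and $G-y$ is unicyclic with cycle $C_{2n+1}$, so by implosiveness (Theorems~\ref{thm.implosivecycle} and~\ref{thm.cliquesum}) ordinary and symbolic powers of $I(G-N_G[y])$ and $I(G-y)$ agree in degrees $\le n$; in particular $J^{(t)}=J^{t}$. Rewriting $J^{t}=J^{(t)}=\bigcap_W\pp_W^{\,t}$ over the minimal vertex covers $W$ of $G-N_G[y]$ turns $M''\in J^{t}$ into the inequalities $\sum_{v\in W}\deg_v(M'')\ge t$, and one then chooses the $\mu_0$ neighbours of $x$ to be stripped from $M''$ so that these inequalities survive the division; the decisive structural inputs are that two neighbours of $x$ are never adjacent in $G-N_G[y]$ once $n\ge2$ (else $G$ would contain a triangle), so an edge of $G-N_G[y]$ meets at most one stripped neighbour, together with the bound $\sum_{v\in W}\deg_v(M'')\ge s-b$ furnished by $M\in B^s$.

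\emph{The main obstacle.} I expect the last step — obtaining the factorization $M''\in P^{\mu_0}J^{t}$ in the range $b<a<s$ — to be the crux, and it is precisely here that the hypothesis $s\le n+1$ enters, through $t=s-a\le n$ and the resulting agreement of ordinary and symbolic powers of $I(G-N_G[y])$ and $I(G-y)$ in low degree. Without that bound the analogous identity can fail: an overlap between the two factorizations of $M$, possible only when $G$ carries short cyclic structure around $x$, would produce a monomial of $A^s\cap B^s$ of degree below $2s$, which cannot lie in $I(G)^s$. Everything else is routine manipulation of monomial generators.
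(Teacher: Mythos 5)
Your setup is fine: the two outer equalities via Lemma~\ref{lem.split}, the reduction to showing a monomial $M=x^ay^bM''\in A^s\cap B^s$ lies in $I(G)^s$, the characterizations of membership in $A^s$, $B^s$, $I(G)^s$ in terms of $M''$, and the disposal of the cases $b\ge s$, $a\ge s$, $a\le b$, or an admissible $\mu\le a-b$ all check out, and they parallel the paper's reduction (where the hard case appears as $i<p$ after computing the colon $(J+xH)^q:x^{s-p}$). The gap is in the hard case, which is exactly where the paper does not argue by hand but invokes a nontrivial exchange result (\cite[Lemma 4.14]{DS}, about graphs with no odd cycles of length up to $2i+3$; this is where $s\le n+1$ enters). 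Your proposed substitute --- pass to $J^{(t)}=J^t$, read $M''\in J^t$ as cover-degree inequalities, and ``strip'' $\mu_0$ neighbour occurrences so that the inequalities survive --- cannot be completed from the inputs you list. Take a minimal vertex cover $W$ of $G-N_G[y]$ containing all of $N_G(x)\setminus\{y\}$. Every stripped occurrence lies in $W$, so after division the bound you retain is $\deg_W(M'')-\mu_0$. The only lower bounds available are $\deg_W(M'')\ge t$ (from $M''\in J^t$) and $\deg_W(M'')\ge s-b=\mu_0+\gamma_0$ (from $B^s$-membership, and note this second bound is valid only for such $W$, not for arbitrary minimal covers of $G-N_G[y]$ as your sketch suggests). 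Hence after stripping you get only $\max(t-\mu_0,\gamma_0)$, and since the hard case means $\mu_0>a-b$, i.e.\ $s-b<t+\mu_0$, this is strictly less than the required $t$. So ``the inequalities survive the division'' is precisely what is not established; the non-adjacency of the neighbours of $x$ does not rescue it at this level of generality, because the $\mu_0$ neighbour occurrences may be exactly the vertices used by the $t$ edges witnessing $M''\in J^t$, and resolving that overlap is an exchange argument of the same depth as the cited lemma.

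Two further points. First, insisting on the exact target $M''\in P^{\mu_0}J^{t}$ is more than is needed and more than the paper's route delivers: \cite[Lemma 4.14]{DS} yields $M''\in J^{j}P^{(s-b)-j}$ for some $j\ge t$, which keeps the total number of factors equal to $s-b$ and trades neighbour-factors for edge-factors (one checks $(\alpha,\mu,\gamma)=(b,\,s-b-j,\,j)$ is then a valid triple); your target increases the total to $\mu_0+t>s-b$, so even its truth in the hard case is not clear, and in any event it is not what your inequalities can reach. Second, the identity $J^{(t)}=J^{t}$ for $t\le n$ does not follow from implosiveness (Theorems~\ref{thm.implosivecycle} and~\ref{thm.cliquesum}) alone; one also needs to know the indecomposable induced subgraphs (as in \cite[Corollary 2a]{HP}), or simply cite \cite[Corollary 4.5]{LT} or \cite[Theorem 4.13]{DS} as the paper does. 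These citation issues are fixable; the missing exchange argument in the hard case is the real gap.
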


\begin{proof} By Lemma \ref{lem.split}, we have
$(I(G-N_G[y]),x) \cap (I(G-y),y) = (I(G-y),xy) = I(G).$ This implies that for any $s \in \NN$,
$$I(G)^s \subseteq (I(G-N_G[y]),x)^s \cap (I(G-y),y)^s.$$

For simplicity of notations, let $J = I(G - N_G[y]) = I(G - \{x,y\})$ and let $K = I(G-y)$. Consider any monomial $M \in (J,x)^s \cap (K,y)^s$. It suffices to show that \begin{align}
M \in I(G)^s = (K,xy)^s = \sum_{q = 0}^s K^q (xy)^{s-q}. \label{eq.need1}
\end{align}

By the binomial expansion, there must exist $0 \le p,q \le s$ such that $M \in J^px^{s-p} \cap K^qy^{s-q}$. That is, there exist monomials $N \in J^p$ and $L \in K^q$ such that $M = Nx^{s-p} = Ly^{s-q}$. Particularly, $y^{s-q} ~\big|~ N$. Since the generators of $J = I(G - \{x,y\})$ do not involve $y$, this implies that $N' = N/y^{s-q} \in J^p$. Thus, if $p \ge q$ then $M = N' x^{s-p}y^{s-q} \in J^px^{s-p}y^{s-q} \subseteq J^p x^{s-p}y^{s-p} \subseteq K^p (xy)^{s-p}$, and (\ref{eq.need1}) follows.

Suppose now that $p < q$. Let $H = (z ~\big|~ z \in N_G(x) \setminus \{y\})$. Then, $K = J + xH$. Therefore,
\begin{align*}
N & \in K^q : (x^{s-p}) \\
& = (J+xH)^q : (x^{s-p}) \\
& = (\sum_{i=0}^q J^i(xH)^{q-i}) : (x^{s-p}) \\
& = \sum_{i=0}^q J^i H^{q-i} x^{\max\{p+q-s-i, 0\}}.
\end{align*}
It follows that there exists a $0 \le i \le q$ such that $M \in J^i H^{q-i} x^{\max\{p+q-s-i, 0\}} x^{s-p}$.

Consider the case when $q = s$. Then, for each $0 \le i \le q$, we have
$$J^iH^{q-i}x^{\max\{p+q-s-i, 0\}} x^{s-p} \subseteq J^iH^{q-i}x^{q-i}x^{s-q} \subseteq K^qx^{s-q}.$$
Thus, (\ref{eq.need1}) holds since $q = s$.

Let us now assume that $q < s \le n+1$. If $p \le i \le q$ then it can be seen that $p+q-s-i < 0$, and so
$$M = Nx^{s-p} \in J^iH^{q-i}x^{s-p} = J^iH^{q-i}x^{q-i}x^{s-q}x^{i-p} \subseteq J^iH^{q-i}x^{q-i}x^{s-q} \in K^qx^{s-q}.$$
This implies that $M \in K^qx^{s-q}$. Since the generators of $K$ do not involve $y$ and $M = Ly^{s-q}$, it follows that $L \in K^qx^{s-q}$, whence $M \in K^q (xy)^{s-q}$, and (\ref{eq.need1}) is proved.

If, on the other hand, $i < p$ then we have
$$N \in J^iH^{q-i} \cap J^p.$$
Since $i < p < q < s \le n+1$, we have $i < n-1$ (and in particular, $G$ contains no odd cycles of lengths up to $2i+3$). Thus, it follows from \cite[Lemma 4.14]{DS} that
$$N \in \sum_{j=p}^q J^jH^{q-j}.$$
Therefore,
$$M = Nx^{s-p} \in \sum_{j=p}^q J^jH^{q-j}x^{s-p} = \sum_{j=p}^q J^jH^{q-j}x^{q-j}x^{s-q}x^{j-p} \subseteq \sum_{j=p}^q J^jH^{q-j}x^{q-j}x^{s-q} \subseteq K^qx^{s-q}.$$
As before, this implies that $M \in K^q (xy)^{s-q}$, and (\ref{eq.need1}) is proved.
\end{proof}

Our next lemma extends that of \cite[Corollaries 5.3 and 5.4]{JKV} to any unicyclic graphs. This lemma, in fact, can be derived back by a careful analysis of the proof of Theorem \ref{thm.decomposition} and structures of indecomposable graphs. The proof we shall present here is, however, elementary.

\begin{lemma} \label{lem.n+1}
Let $G$ be a unicyclic graph with a unique cycle $C_{2n+1} = (x_1, \dots, x_{2n+1})$, and let $I = I(G)$ be its edge ideal. Then
\begin{enumerate}
\item For $1 \le s \le n$, we have $I^{(s)} = I^s$.
\item $I^{(n+1)} = I^{n+1} + (x_1 \cdots x_{2n+1}).$
\end{enumerate}
\end{lemma}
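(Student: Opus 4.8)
The plan is to prove both parts simultaneously by induction on the number of vertices of $G$, using the leaf-reduction technique already set up in Lemmas \ref{lem.split} and \ref{lem.intersection}. The base case is $G = C_{2n+1}$ itself, where the cycle has no leaves; here part (1) is classical (for odd cycles the only non-ordinary symbolic behavior first appears in degree $n+1$, and this can be extracted from Theorem \ref{thm.implosivecycle} together with the description of indecomposable subgraphs, or verified directly from $I^{(s)} = \bigcap \pp^s$ over the minimal vertex covers), and part (2) amounts to the observation that $x_1 \cdots x_{2n+1}$ lies in $\pp^{n+1}$ for every minimal vertex cover $\pp$ of $C_{2n+1}$ (since any such cover has at least $n+1$ of the $2n+1$ vertices), while a short degree/support argument shows $I^{n+1} + (x_1\cdots x_{2n+1})$ already exhausts $I^{(n+1)}$.

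For the inductive step, suppose $G$ properly contains $C_{2n+1}$, so $G$ has a leaf edge $xy$ with $y$ a leaf vertex; write $G' = G - y$ and $G'' = G - N_G[y]$, both of which are still unicyclic with the same unique cycle $C_{2n+1}$ and strictly fewer vertices, so the inductive hypothesis applies to them. The key structural input is a decomposition of $I(G)^{(s)}$ in terms of $I(G')^{(s)}$ and $I(G'')^{(s)}$: since the associated primes (minimal vertex covers) of $G$ are controlled by those of $G'$ and $G''$ — every minimal vertex cover of $G$ either contains $y$ (and restricts to one of $G'$) or contains $x$ but not $y$ (and restricts to one of $G''$ together with $x$) — one obtains
$$I(G)^{(s)} = \bigl(I(G'')^{(s)}, x\bigr)^{?} \cap \bigl(I(G')^{(s)}, y\bigr),$$
or more precisely $I(G)^{(s)} = \bigl(I(G'')^{(s)} + (x^s)\bigr) \cap \bigl(I(G')^{(s)} + (y^s)\bigr)$ after collecting powers correctly; this is the natural symbolic analogue of the $s=1$ identity $I(G) = (I(G''),x) \cap (I(G'),y)$ from Lemma \ref{lem.split}. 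Substituting the inductive formulas for $I(G')^{(s)}$ and $I(G'')^{(s)}$ (which for $s \le n$ are just ordinary powers, and for $s = n+1$ add the single generator $x_1\cdots x_{2n+1}$), the problem reduces to an intersection of explicit monomial ideals. For part (1), $1 \le s \le n$, the monomial $x_1\cdots x_{2n+1}$ never enters, and one must check $\bigl(I(G'')^s,x\bigr)^{\text{(appropriate powers)}} \cap \bigl(I(G')^s,y\bigr) = I(G)^s$ — but this is exactly the content of Lemma \ref{lem.intersection}, which was proved in the range $1 \le s \le n+1$. For part (2), $s = n+1$, one runs the same intersection but now $I(G')^{(n+1)} = I(G')^{n+1} + (x_1\cdots x_{2n+1})$ and $I(G'')^{(n+1)} = I(G'')^{n+1} + (x_1\cdots x_{2n+1})$, and one checks that the extra generator survives the intersection (clear, as it is divisible by no power of $y$ and lies in both sides) and contributes precisely the single new generator $x_1 \cdots x_{2n+1}$ on top of $I(G)^{n+1}$.

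The main obstacle will be the bookkeeping in the $s = n+1$ case: one must show that when intersecting $\bigl(I(G'')^{n+1} + (x_1\cdots x_{2n+1}), x\bigr) \cap \bigl(I(G')^{n+1} + (x_1\cdots x_{2n+1}), y\bigr)$, no monomials appear beyond $I(G)^{n+1} + (x_1\cdots x_{2n+1})$. The containment $\supseteq$ is immediate, so the work is in $\subseteq$: one takes a monomial $M$ in the intersection, and if $M$ is divisible by $x_1\cdots x_{2n+1}$ one factors that off and is reduced to a lower case, while if not, the argument must mimic the case analysis of Lemma \ref{lem.intersection} to force $M \in I(G)^{n+1}$, being careful about the boundary $i < p$ subcase where \cite[Lemma 4.14]{DS} was invoked under the hypothesis $i < n-1$ — here, with $s = n+1$, the inequality chain only gives $i \le n-1$, so one may need to treat the extremal index separately, precisely the point at which the new cycle generator is born. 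Handling that extremal index cleanly, and confirming it produces exactly $x_1\cdots x_{2n+1}$ and nothing more, is where the care is needed; everything else is a routine adaptation of Lemmas \ref{lem.split} and \ref{lem.intersection}.
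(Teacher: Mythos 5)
Your overall plan (leaf induction with the cycle as base case, using Lemmas \ref{lem.split} and \ref{lem.intersection}) is the same as the paper's, but the decomposition you actually commit to in the inductive step is false, and this is a genuine gap. Writing $G' = G-y$ and $G'' = G-N_G[y]$, the correct identity — the one the paper uses — is $I(G)^{(s)} = (I(G''),x)^{(s)} \cap (I(G'),y)^{(s)}$, i.e.\ one takes \emph{symbolic powers of the augmented ideals}; these are then evaluated by the induction hypothesis together with \cite[Theorem 3.4]{HNTT}, which gives $(I(G'),y)^{(s)} = \sum_{i} I(G')^{(s-i)}(y)^i$, not $I(G')^{(s)}+(y^s)$. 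Your ``more precise'' formula $I(G)^{(s)} = \bigl(I(G'')^{(s)}+(x^s)\bigr) \cap \bigl(I(G')^{(s)}+(y^s)\bigr)$ discards all the mixed terms $I(G'')^{(s-i)}x^i$ and $I(G')^{(s-i)}y^i$, and those are exactly where the cycle monomial comes from when $x$ lies on the cycle (in that case $G''$ is a forest, so $I(G'')^{(n+1)}=I(G'')^{n+1}$ contributes no cycle monomial at all). Concretely, let $G$ be the triangle $(x_1,x_2,x_3)$ with one whisker $y$ attached at $x=x_1$, and $s=n+1=2$: your first factor is $(x_2^2x_3^2,\,x_1^2)$, which does not contain $x_1x_2x_3$, yet $x_1x_2x_3 \in I(G)^{(2)}$ — so your intersection fails to contain the very generator the lemma asserts. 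With the correct factor there is no problem, since $x_1x_2x_3 = x_1\cdot(x_2x_3) \in (I(G''),x_1)^{2}$, a point the paper makes explicitly. Relatedly, the new generator is not ``born'' at an extremal index inside Lemma \ref{lem.intersection}: that lemma is already proved for all $1 \le s \le n+1$ (the chain $i<p<q<s\le n+1$ forces $i \le n-2$), and in the correct argument it is quoted as is; the generator $x_1\cdots x_{2n+1}$ enters only through the symbolic powers of the two factors, supplied by induction and \cite[Theorem 3.4]{HNTT}.

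Two smaller points. In the base case $G=C_{2n+1}$ you assert that a ``short degree/support argument'' shows $I^{(n+1)} \subseteq I^{n+1}+(x_1\cdots x_{2n+1})$; that inclusion is precisely the nontrivial content of \cite[Corollaries 5.3 and 5.4]{JKV} (or, as you also suggest and the paper acknowledges, it can be extracted from implosivity, Theorem \ref{thm.implosivecycle}, together with the Harary--Plummer description of indecomposable induced subgraphs), so it needs a citation or an actual argument rather than a wave. And for part (1) the paper does not induct at all — it cites \cite[Corollary 4.5]{LT} (see also \cite[Theorem 4.13]{DS}); your plan to re-prove (1) by the same leaf induction could in principle work once the decomposition is corrected to $(I(G''),x)^{(s)} \cap (I(G'),y)^{(s)}$, but as written it rests on the flawed identity above. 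Finally, note that the paper's inductive step also splits into the two cases $x \in V(C_{2n+1})$ (where $G''$ is a forest and \cite[Theorem 5.9]{SVV} applies) and $x \notin V(C_{2n+1})$ (where $G''$ is again unicyclic), a case distinction your sketch glosses over but which is forced once the factors are handled correctly.
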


\begin{proof} (1) follows from \cite[Corollary 4.5]{LT} (see also \cite[Theorem 4.13]{DS}). We shall now prove (2). It is easy to see that $G$ is obtained by attaching a forest $T$ to its unique cycle $C_{2n+1}$ at zero or more vertices on the cycle. Let $k = |E(T)|$ be the number of edges in this forest. We shall use induction on $k$. For $k = 0$, the conclusion is that of \cite[Corollaries 5.3 and 5.4]{JKV}. Suppose that $k \ge 1$.

If $G$ is disconnected and $T'$ is a connected component of $G$ that does not contain the cycle $C_{2n+1}$ then the conclusion follows by using \cite[Theorem 3.4]{HNTT} and the induction hypothesis on $T \setminus T'$. Thus, we shall assume that $G$ is a connected graph.

Let $xy$ be a leaf in $T$, where $y$ is a leaf vertex and $x$ is its only neighbor (such a leaf $xy$ exists since $k \ge 1$). By Lemma \ref{lem.split}, we have
\begin{align}
I = (I:y) \cap (I(G-y), y) = (I(G-N_G[y]), x) \cap (I(G-y), y). \label{eq.minusy}
\end{align}
Since all ideals in (\ref{eq.minusy}) are squarefree monomial ideals, we have
\begin{align}
I^{(n+1)} = (I(G-N_G[y]),x)^{(n+1)} \cap (I(G-y),y)^{(n+1)}. \label{eq.sind}
\end{align}

Observe that since $y$ is a leaf in $G$, $y \not\in V(C_{2n+1})$. Thus, $G-y$ is a unicyclic graph, so by the induction hypothesis and \cite[Theorem 3.4]{HNTT}, we have
$$(I(G-y),y)^{(n+1)} = (I(G-y),y)^{n+1} + (x_1 \cdots x_{2n+1}).$$

Observe further that if $x \in V(C_{2n+1})$ then $G-N_G[y] = G-\{x,y\}$ is a forest (so, a bipartite graph) and, thus, by \cite[Theorem 5.9]{SVV} and \cite[Theorem 3.4]{HNTT} we have
$$(I(G-N_G[y]),x)^{(n+1)} = (I(G-N_G[y]),x)^{n+1}.$$
This, together with (\ref{eq.sind}), implies that
\begin{align*}
I^{(n+1)} & = (I(G-N_G[y]),x)^{n+1} \cap [(I(G-y),y)^{n+1} + (x_1 \cdots x_{2n+1})] \\
& = [(I(G-N_G[y]),x)^{n+1} \cap (I(G-y),y)^{n+1}] + [(I(G-N_G[y]),x)^{n+1} \cap (x_1 \cdots x_{2n+1})] \\
& = I^{n+1} + (x_1 \cdots x_{2n+1}).
\end{align*}
Here, the last equality follows from Lemma \ref{lem.intersection} and the fact that, since $x \in V(C_{2n+1})$, $x_1 \cdots x_{2n+1}$ can be written as a product of $x$ and $n$ edges on $C_{2n+1}$; that is, $x_1 \cdots x_{2n+1} \in (I(G-N_G[y]),x)^{n+1}$.

If, on the other hand, $x \not\in V(C_{2n+1})$ then $G-N_G[y] = G-\{x,y\}$ is itself a unicyclic graph, and so, by the induction hypothesis and \cite[Theorem 3.4]{HNTT}, we have
$$(I(G-N_G[y]),x)^{(n+1)} = (I(G-N_G[y]),x)^{n+1} + (x_1 \cdots x_{2n+1}).$$
This, together with (\ref{eq.sind}), implies that
\begin{align*}
I^{(n+1)} & = [(I(G-N_G[y]),x)^{n+1}+(x_1 \cdots x_{2n+1})] \cap [(I(G-y),y)^{n+1} + (x_1 \cdots x_{2n+1})] \\
& = [(I(G-N_G[y]),x)^{n+1} \cap (I(G-y),y)^{n+1}] + (x_1 \cdots x_{2n+1}) \\
& = I^{n+1} + (x_1 \cdots x_{2n+1}).
\end{align*}
Here, the last equality again follows from Lemma \ref{lem.intersection}. The lemma is proved.
\end{proof}

We are now ready to present the first main theorem of the paper.

\begin{theorem} \label{thm.decomposition}
Let $G$ be a unicyclic graph with a unique cycle $C_{2n+1} = (x_1, \dots, x_{2n+1})$, and let $I = I(G)$ be its edge ideal. Let $s \in \NN$ and write $s = k(n+1) + r$ for some $k \in \ZZ$ and $0 \le r \le n$. Then
$$I^{(s)} = \sum_{t=0}^k I^{s-t(n+1)} (x_1 \cdots x_{2n+1})^t.$$
\end{theorem}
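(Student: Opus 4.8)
The plan is to combine the structural input about implosive graphs with the two generator-degree computations just established. First I would invoke Theorems~\ref{thm.implosivecycle} and~\ref{thm.cliquesum}: a unicyclic graph $G$ is obtained by gluing a forest $T$ onto the cycle $C_{2n+1}$, and a forest is a clique-sum of edges $K_2$, each of which is trivially implosive; hence $G$ is a clique-sum of an implosive cycle with implosive edges, so $G$ is implosive. By Theorem~\ref{thm.symbolicRees} (together with the implosive property), the symbolic Rees algebra $\R_s(I)$ is generated over $R$ by monomials $\x^\v t^b$ with $\v \in \{0,1\}^{|V(G)|}$ and $b = \tau(G^\v)$; I would then argue, using the results of \cite{FGR, HP, MRV} on the structure of indecomposable parallelizations of a unicyclic graph, that the only indecomposable induced subgraphs contributing a generator in degree $\ge 2$ are the odd cycle itself, which forces all minimal generators of $\R_s(I)$ to sit in $t$-degrees $1$ and $n+1$. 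Equivalently: $\R_s(I) = R[It, \, (x_1\cdots x_{2n+1})\,t^{n+1}]$, since by Lemma~\ref{lem.n+1}(2) the degree-$(n+1)$ piece $I^{(n+1)}$ is exactly $I^{n+1} + (x_1\cdots x_{2n+1})$ and $I^{(s)} = I^s$ for $s \le n$ by Lemma~\ref{lem.n+1}(1).

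Granting that $\R_s(I)$ is generated in degrees $1$ and $n+1$ by $It$ and $(x_1\cdots x_{2n+1})t^{n+1}$, the degree-$s$ component of $\R_s(I)$ is
\[
I^{(s)} = \sum_{t=0}^{\lfloor s/(n+1)\rfloor} I^{s-t(n+1)} (x_1\cdots x_{2n+1})^t,
\]
and since $s = k(n+1) + r$ with $0 \le r \le n$ gives $\lfloor s/(n+1)\rfloor = k$, this is precisely the claimed formula. Alternatively, one can avoid quoting the structure theorems for indecomposable graphs in full and instead run an induction on $s$ (or on $k$): the containment $\supseteq$ is immediate because each summand $I^{s-t(n+1)}(x_1\cdots x_{2n+1})^t$ lies in $\pp^{s-t(n+1)} \cdot \pp^{t(n+1)} = \pp^s$ for every $\pp \in \Ass(I)$, using that $x_1\cdots x_{2n+1}$ is a product of $n$ edges plus one extra vertex of the odd cycle and hence lies in $\pp^{n+1}$ for all such $\pp$ — this is the same elementary fact used at the end of Lemma~\ref{lem.n+1}. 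For $\subseteq$ one writes $I^{(s)} = I^{(n+1)} \cdot I^{(s-(n+1))}$-type relations coming from finite generation of $\R_s(I)$ in degrees $1$ and $n+1$, then applies Lemma~\ref{lem.n+1}(2) and the inductive hypothesis, collecting terms.

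I expect the main obstacle to be the $\subseteq$ direction, specifically justifying that $\R_s(I)$ has \emph{no} minimal generators in $t$-degrees between $2$ and $n$ and none in degree $> n+1$ beyond what $It$ and $(x_1\cdots x_{2n+1})t^{n+1}$ generate. The first half of this is exactly Lemma~\ref{lem.n+1}(1), so the real work is ruling out higher-degree generators: via the implosive/clique-sum machinery this reduces to the claim that the only indecomposable (connected) induced subgraphs of a parallelization $G^\v$ of a unicyclic $G$ that are not bipartite are odd cycles, so their vertex cover number already appears from powers of $I$ and the single new generator. This is where the cited results \cite{FGR, HP, MRV} do the heavy lifting; the remainder — expanding $(I^{n+1} + (x_1\cdots x_{2n+1}))$-powers, handling the floor function, and matching indices — is routine bookkeeping once finite generation in the stated degrees is in hand.
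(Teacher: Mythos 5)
Your plan is essentially the paper's own proof: implosiveness of $G$ via clique-sums (Theorems \ref{thm.implosivecycle} and \ref{thm.cliquesum}), the Harary--Plummer classification \cite{HP} showing the only indecomposable induced subgraphs are edges and the odd cycle, Lemma \ref{lem.n+1} to identify $(x_1\cdots x_{2n+1})t^{n+1}$ as the sole degree-$(n+1)$ generator, and then reading off the degree-$s$ graded piece of $\R_s(I)$. The one point the paper handles that you gloss over is a disconnected unicyclic $G$ (the clique-sum argument requires connectedness), which it treats separately using \cite{SVV} for the forest components and \cite{HNTT} to combine; with that reduction added, your route coincides with the paper's.
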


\begin{proof} We first consider the case when $G$ is a connected graph. Observe that since $G$ is a unicyclic graph, $G$ can be obtained by taking the clique-sums of an odd cycle successively with $K_2$. Thus, by Theorems \ref{thm.implosivecycle} and \ref{thm.cliquesum}, we deduce that $G$ is an implosive graph. This implies that the symbolic Rees algebra $\R_s(I)$ of $I$ is generated by monomials of the form $\x^\v t^b$, where $\v \in \{0,1\}^{|V(G)|}$ and $G^\v$ (which is now necessarily an induced subgraph of $G$) is indecomposable.

It can be seen from \cite[Corollary 2a]{HP} that an induced subgraph of $G$ is indecomposable only if it is either an edge or the odd cycle of $G$. It follows from Lemma \ref{lem.n+1} that these indeed give generators for the symbolic Rees algebra $\R_s(I)$. That is, $\R_s(I)$ is generated only in degrees 1 and $(n+1)$, where the only minimal generator of degree $(n+1)$ is $(x_1 \cdots x_{2n+1}) t^{n+1}$. Particularly, for any $s \in \NN$, we have
$$I^{(s)} = \sum_{p+q(n+1) = s} I^p(I^{(n+1)})^q = \sum_{t=0}^k I^{s-t(n+1)} (x_1 \cdots x_{2n+1})^t.$$
The assertion is proved when $G$ is connected.

Suppose now that $G$ is not connected. Let $G'$ be the connected component of $G$ which contains its unique odd cycle, and let $G'' = G \setminus G'$. Then $G''$ is a forest and, particularly, $G''$ is a bipartite graph. Thus, by \cite[Theorem 5.9]{SVV}, we have $I(G'')^{(s)} = I(G'')^s$ for all $s \in \NN$. The conclusion then follows from the assertion for $G'$ and by applying \cite[Theorem 3.4]{HNTT}. The theorem is proved.
\end{proof}

\begin{example} \label{ex.decomposition}
Symbolic powers of edge ideals of graphs, not necessarily unicyclic, are in general more complicated. Their symbolic Rees algebras may also contain non-squarefree monomial generators. Let $G$ be the graph with vertex set $V(G)=\{x_1,x_2,x_3,x_4,x_5,x_6,x_7\}$ and edge set $$E(G)=\{x_1x_2,x_2x_3,x_3x_4,x_4x_5,x_5x_1,x_1x_6,x_6x_7,x_7x_1\}.$$
Then, $G$ is the clique-sum of $C_3$ and $C_5$ at $x_1$. Let $I = I(G)$. Direct computation shows that
\begin{align*}
I^{(2)} & =I^2+(x_1x_6x_7), \\
I^{(3)} & =I^3+(x_1x_2x_3x_4x_5)+I(x_1x_6x_7), \\
I^{(4)} & =I^4+I(x_1x_2x_3x_4x_5)+I^2(x_1x_6x_7)+(x_1x_6x_7)^2, \\
I^{(5)} & =I^5+I^2(x_1x_2x_3x_4x_5)+I^3(x_1x_6x_7)+I(x_1x_6x_7)^2+(x_1^2x_2x_3x_4x_5x_6x_7).
\end{align*}
\end{example}

Theorem \ref{thm.decomposition} allows us to quickly recover and extend \cite[Propositions 5.10 and 5.11]{JKV} to any unicyclic graphs.

Recall that for a homogeneous ideal $I \subseteq R$, $\alpha(I)$ denotes the least generating degree of $I$. The \emph{Waldschmidt} constant of $I$ is defined to be (where the limit is known to exist)
$$\widehat{\alpha}(I) = \lim_{s \rightarrow \infty} \dfrac{\alpha(I^{(s)})}{s}.$$
The \emph{resurgence} and \emph{asymptotic resurgence} numbers of $I$ are defined to be
$$\rho(I) = \sup \Big\{\dfrac{s}{t} ~\Big|~ I^{(s)} \not\subseteq I^t\Big\} \text{ and } \rho_a(I) = \sup \Big\{ \dfrac{s}{t} ~\Big|~ I^{(sr)} \not\subseteq I^{tr} \ \forall \ r \gg 0\Big\}.$$

\begin{theorem} \label{thm.alpharho}
Let $G$ be a unicyclic graph with a unique cycle $C_{2n+1} = (x_1, \dots, x_{2n+1})$, and let $I = I(G)$ be its edge ideal.
Then
\begin{enumerate}
\item For any $s \in \NN$,
$$\alpha(I^{(s)}) = 2s - \BL \dfrac{s}{n+1} \BR.$$
Particularly, the Waldschmidt constant of $I$ is given by
$$\widehat{\alpha}(I) = 2 - \dfrac{1}{n+1} = \dfrac{2n+1}{n+1}.$$
\item $\alpha(I^{(s)})< \alpha(I^t)$ if and only if
$I^{(s)}\nsubseteq I^t$.
\item The resurgence of $I$ is given by
$$\rho_a(I) = \rho(I) = \dfrac{2n+2}{2n+1}.$$
\end{enumerate}
\end{theorem}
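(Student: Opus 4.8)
The plan is to derive all three statements from the decomposition in Theorem~\ref{thm.decomposition}, treating part~(1) as the computational core, part~(2) as a bridge, and part~(3) as a formal consequence.

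\medskip

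\textbf{Part (1): the function $\alpha(I^{(s)})$.} First I would compute the least generating degree of each summand $I^{s-t(n+1)}(x_1\cdots x_{2n+1})^t$ in the decomposition. Since $\alpha(I)=2$ (every edge is a degree-$2$ monomial) and $I$ is generated in a single degree, $\alpha(I^j)=2j$ for all $j\geq 0$; the monomial $(x_1\cdots x_{2n+1})^t$ has degree $(2n+1)t$, so the $t$-th summand is generated in degrees $\geq 2(s-t(n+1))+(2n+1)t = 2s - t$. Hence $\alpha(I^{(s)}) = \min_{0\le t\le k}\bigl(2s-t\bigr) = 2s-k$, where $k=\lfloor s/(n+1)\rfloor$. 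One subtlety to check: a lower-degree element could in principle arise from cancellation or from a minimal generator of a summand that is not of the naive product form, so I would note that the sum of the generating sets of the summands already generates $I^{(s)}$, and degrees of generators in a sum of ideals are bounded below by the minimum over the summands of their own $\alpha$, with equality attained by the $t=k$ term (take $k$ copies of $x_1\cdots x_{2n+1}$ times a generator of $I^{\,r}$ when $r>0$, or $k$ copies of $x_1\cdots x_{2n+1}$ times $I^{r-?}$—more carefully, when $r=0$ the element $(x_1\cdots x_{2n+1})^k$ itself lies in $I^{(s)}$ and has degree $(2n+1)k = 2s-k$). The Waldschmidt constant then follows by dividing by $s$ and letting $s\to\infty$: $\lim_{s\to\infty}(2s-\lfloor s/(n+1)\rfloor)/s = 2 - 1/(n+1)$.

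\medskip

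\textbf{Part (2): degree test detects non-containment.} The direction ``$\alpha(I^{(s)})<\alpha(I^t)$ $\Rightarrow$ $I^{(s)}\not\subseteq I^t$'' is immediate, since $I^{(s)}\subseteq I^t$ would force every element of $I^{(s)}$ to have degree $\geq \alpha(I^t)$. For the converse I would argue contrapositively: if $\alpha(I^{(s)})\geq\alpha(I^t)$, i.e.\ $2s-k \ge 2t$, then I claim $I^{(s)}\subseteq I^t$. Using the decomposition, it suffices to show each $I^{s-j(n+1)}(x_1\cdots x_{2n+1})^j\subseteq I^t$ for $0\le j\le k$. Now $x_1\cdots x_{2n+1}\in I^{(n+1)}$, and in fact $x_1\cdots x_{2n+1}$ is a product of $n$ edges times one extra vertex, so $(x_1\cdots x_{2n+1})\in I^n\cdot\mm$; iterating, $(x_1\cdots x_{2n+1})^j \in I^{nj}\cdot(\text{something})$, hence $I^{s-j(n+1)}(x_1\cdots x_{2n+1})^j\subseteq I^{\,s-j(n+1)+nj} = I^{\,s-j}$. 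Since $s-j\ge s-k\ge t$ (the last from $2s-k\ge 2t$ together with $k\le s$, which gives $s-k\ge 2t-s-k+... $—here I need the sharper bound $s-k\ge t$; note $2s-k\ge 2t$ and $k=\lfloor s/(n+1)\rfloor\le s/2$ for $n\ge1$, so $s-k\ge s/2\ge\ldots$; the clean inequality is $s-k\ge t \iff 2s-2k\ge 2t \iff k\le 2s-2t-k$, and from $2s-k\ge 2t$ we get $2s-2t\ge k$, hence $k\le 2s-2t$, and since $k\ge 0$ also $k\le 2s-2t-k+k$; I would just verify $s-k\ge t$ directly from $2s-k\ge 2t$ and $k\le s$: indeed $s-k = (2s-k)-s\ge 2t-s$, which is not quite enough, so the correct route is to observe $s-k\geq s-k$ and $2(s-k)\geq 2s-k-k\geq 2s-k-(s-k)\cdot 0$—I will instead use that $k\le s/(n+1)$ implies $s-k\ge \frac{n}{n+1}s\ge \frac{n}{n+1}\cdot\frac{2t(n+1)}{2n+1}$; rather than this, the cleanest is to invoke that $\alpha(I^t)=2t$ and $\alpha(I^{s-j})=2(s-j)$, and containment $I^{(s)}\subseteq I^t$ should follow from the well-known fact that for edge ideals, or more simply from $\rho(I)$, so I would actually prove (2) \emph{after} (3), or prove the two together). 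Concretely, the honest plan is: prove the forward implication of (2) trivially; then prove (3) directly from the decomposition; then read off the backward implication of (2) from the resurgence computation. The containment $I^{(s)}\subseteq I^t$ for $s/t\ge (2n+2)/(2n+1)$ is exactly what the resurgence bound gives.

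\medskip

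\textbf{Part (3): resurgence.} For the upper bound $\rho(I)\le (2n+2)/(2n+1)$, I would show that $I^{(s)}\subseteq I^t$ whenever $s/t\ge(2n+2)/(2n+1)$, i.e.\ $(2n+1)s\ge(2n+2)t$. Using the decomposition and the containment $(x_1\cdots x_{2n+1})^j\subseteq I^{nj}$ noted above, $I^{(s)}\subseteq I^{\,s-k}$ where $k=\lfloor s/(n+1)\rfloor$; and $s-k\ge s - s/(n+1) = \frac{ns}{n+1}\ge \frac{n}{n+1}\cdot\frac{(2n+2)t}{2n+1} = \frac{2nt}{2n+1}$, hmm this gives $s-k\ge \frac{2n}{2n+1}t$ which is less than $t$, so this crude bound is insufficient and I must instead track degrees: I would combine $\alpha(I^{(s)}) = 2s-k$ with the standard fact (valid here because $I^{(s)}$ is componentwise linear / has linear quotients in the relevant range, or simply because the only ``new'' generator $x_1\cdots x_{2n+1}$ contributes controlled powers) that $I^{(s)}\subseteq I^t \iff 2s-k\ge 2t \iff$ the $t=k$ summand lands in $I^t$. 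For the lower bound $\rho(I)\ge(2n+2)/(2n+1)$, I exhibit non-containment: take $s=2n+1$ and $t=2n+2$—wait, we need $s/t$ close to $(2n+2)/(2n+1)$ from \emph{below} giving non-containment, so take $s$ a multiple of $n+1$, say $s=m(n+1)$, $t=m(2n+1)\cdot\frac{?}{}$... concretely $s=n+1$, $t=2$: then $(x_1\cdots x_{2n+1})\in I^{(n+1)}$ has degree $2n+1 < 2\cdot 2 = \alpha(I^2)$ only when $2n+1<4$, so instead take $s=2n+1$: $k=\lfloor(2n+1)/(n+1)\rfloor=1$, $\alpha(I^{(2n+1)})=2(2n+1)-1=4n+1$, and with $t=2n+1$, $\alpha(I^{2n+1})=4n+2>4n+1$, so $I^{(2n+1)}\not\subseteq I^{2n+1}$, giving $\rho(I)\ge 1$—not strong enough. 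The right choice: $s=(n+1)m$ with $k=m$, so $\alpha(I^{(s)}) = 2(n+1)m - m = (2n+1)m$; take $t$ with $2t>(2n+1)m$, i.e.\ $t\ge \lceil((2n+1)m+1)/2\rceil$, the largest such useful ratio being $s/t \to (2n+1)/\lceil(2n+1)/2\rceil\cdot\frac{m}{m}$... Let me just take the extremal pair $s = 2(n+1)$, $t = 2n+1$: then $s/t = (2n+2)/(2n+1)$, $k=\lfloor 2(n+1)/(n+1)\rfloor = 2$, $\alpha(I^{(s)}) = 4(n+1)-2 = 4n+2 = 2(2n+1) = \alpha(I^{2n+1})$, borderline; so for non-containment I take $t=2n+1$, $s=2(n+1)-1 = 2n+1$: handled above, insufficient.

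\medskip

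Given these arithmetic tangles, the \textbf{main obstacle} I anticipate is \emph{not} the algebra but pinning down the exact extremal pairs $(s,t)$ for the resurgence: one must show $I^{(s)}\subseteq I^t$ for \emph{all} $s/t\ge(2n+2)/(2n+1)$ (the upper bound, which requires genuinely using the decomposition to see no generator of $I^{(s)}$ escapes $I^t$, i.e.\ that $(x_1\cdots x_{2n+1})^k\in I^{t}$ when $\lfloor ks \cdot\rfloor$... when $(2n+1)k\ge 2t$) and exhibit a sequence $(s_i,t_i)$ with $s_i/t_i\to(2n+2)/(2n+1)$ and $I^{(s_i)}\not\subseteq I^{t_i}$ (the lower bound, for which I would take $s_i = (n+1)i$, $t_i = ni + \lceil i/2\rceil$ or similar, chosen so that $\alpha(I^{(s_i)}) = (2n+1)i < 2t_i$ fails by exactly one, forcing non-containment via the degree obstruction, while $t_i/s_i\to(2n+1)/(2n+2)$). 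The asymptotic resurgence $\rho_a(I)=\rho(I)$ then follows because the degree obstruction used for the lower bound is stable under passing to $(sr,tr)$ for all $r$, and because $\rho_a\le\rho$ always; alternatively one invokes that $\widehat\alpha(I) = (2n+1)/(n+1)$ together with the general inequality $\rho_a(I)\ge \alpha(I)/\widehat\alpha(I)\cdot$(something) — but the self-contained route via the decomposition is to note the family witnessing non-containment for $\rho$ already consists of ``$r$-scalable'' pairs. Throughout, the key reusable lemma is: $(x_1\cdots x_{2n+1})^j\in I^{nj}\setminus I^{nj+j}$-ish, or more precisely $\deg = (2n+1)j$ so it lies in $I^\ell$ iff $2\ell\le(2n+1)j$, i.e.\ iff $\ell\le\lfloor(2n+1)j/2\rfloor$, and this single observation converts every containment question into the arithmetic inequalities above.
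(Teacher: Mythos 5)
Your part (1) is fine and matches the paper's argument, but parts (2) and (3) have a genuine gap, which you yourself flag: every containment you attempt rests on the estimate $x_1\cdots x_{2n+1}\in I^n$, applied $i$ times, which only gives $I^{s-i(n+1)}(x_1\cdots x_{2n+1})^i\subseteq I^{s-i}$ and hence would need $s-k\ge t$, strictly stronger than the available hypothesis $2s-k\ge 2t$. The missing idea is that the \emph{square} of the cycle monomial is the product of all $2n+1$ edges of the odd cycle, so $(x_1\cdots x_{2n+1})^2\in I^{2n+1}$. With this, for even $i$ one gets $I^{s-i(n+1)}(x_1\cdots x_{2n+1})^i\subseteq I^{s-\frac{i}{2}}$, and for odd $i$, writing $w^i=w^{i-1}\cdot w$ with $w^{i-1}\in I^{(2n+1)\frac{i-1}{2}}$ and $w\in I^n$, one gets $I^{s-i(n+1)}w^i\subseteq I^{s-\frac{i+1}{2}}$; since $2s-i$ is odd when $i$ is odd, the inequality $2s-i\ge 2t$ improves to $2s-i\ge 2t+1$, so in both parities the exponent is at least $t$. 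This is exactly how the paper proves the converse direction of (2), and without it your plan degenerates into circularity (you propose to deduce (2) from (3) and (3) from (2)), or into unproven assertions such as componentwise linearity of $I^{(s)}$ or the equivalence ``$I^{(s)}\subseteq I^t\iff 2s-k\ge 2t$'', which is precisely the content of (2).

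For part (3), once (2) is in place the upper bound is the short arithmetic you already have: $I^{(s)}\not\subseteq I^t$ forces $2s-\lfloor\frac{s}{n+1}\rfloor<2t$, hence $s/t<\frac{2n+2}{2n+1}$. For the lower bound, your search for explicit extremal pairs $(s,t)$ never converges (as your own computations show, the natural candidates are exactly borderline), and the clean route---used in the paper---is the general inequality $\alpha(I)/\widehat{\alpha}(I)\le\rho_a(I)\le\rho(I)$ of Guardo--Harbourne--Van Tuyl, which with $\alpha(I)=2$ and $\widehat{\alpha}(I)=\frac{2n+1}{n+1}$ gives $\frac{2n+2}{2n+1}\le\rho_a(I)\le\rho(I)$ immediately; you gesture at this bound but garble it (``$\alpha(I)/\widehat{\alpha}(I)\cdot$(something)'') and do not commit to it, so as written neither the lower bound for $\rho$ nor the equality $\rho_a(I)=\rho(I)$ is established.
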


\begin{proof} (1) It follows from Theorem \ref{thm.decomposition} that for any $s \in \NN$,
$$\alpha(I^{(s)}) = 2s - \BL \dfrac{s}{n+1} \BR.$$
Moreover, ${\displaystyle \frac{s}{n+1}-1\leq\BL\frac{s}{n+1}\BR\leq \frac{s}{n+1}}$. Thus,
$$\widehat{\alpha}(I)={\lim\limits_{s \to \infty}}\dfrac{\alpha(I^{(s)})}{s}= 2 - \dfrac{1}{n+1} = \dfrac{2n+1}{n+1}.$$

(2) If $\alpha(I^{(s)})< \alpha(I^t)$ then clearly $I^{(s)}\nsubseteq I^t$. Conversely, suppose that $s=k(n+1)+r$,
where $0\leq r \leq n$, and that $\alpha(I^{(s)})\geq \alpha(I^t)$. By part (1), we have $2s-k\geq 2t$. Thus, $2s-i\geq 2t$ for all $0\leq i\leq k$.

Observe that if $0\leq i\leq k$ is odd then $I^{s-i(n+1)}(x_1\cdots x_{2n+1})^i\subseteq I^{s-\frac{i+1}{2}}\subseteq I^t$. On the other hand, if $0\leq i\leq k$ is even then $I^{s-i(n+1)}(x_1\cdots x_{2n+1})^i\subseteq I^{s-\frac{i}{2}}\subseteq I^t$. Therefore, by Theorem \ref{thm.decomposition}, we have that $I^{(s)}\subseteq I^t$. The assertion is proved.

(3) Let $T=\{\frac{s}{t} ~\big|~ I^{(s)}\nsubseteq I^t\}$. For any $\frac{s}{t}\in T$, by part (2), we have $\alpha(I^{(s)})< \alpha(I^t)$. By part (1), this implies that $2s-\lfloor\frac{s}{n+1}\rfloor < 2t$. It follows that $2s-\frac{s}{n+1}< 2t$, i.e., $\frac{s}{t}< \frac{2n+2}{2n+1}$. Therefore, $\rho(I)\leq \frac{2n+2}{2n+1}$.

On the other hand, by \cite[Theorem 1.2]{GHV}, we have ${\alpha}(I)/ \widehat{\alpha}(I) \leq \rho_a(I)\leq \rho(I)$. Thus, by part (1), this gives us that $\frac{2n+2}{2n+1} \leq \rho_a(I)\leq \rho(I)$. The theorem is proved.
\end{proof}


\section{A general lower bound for symbolic powers of graphs} \label{sec.genbound}

The aim of this section is to give a general linear lower bound for the regularity of symbolic powers of edge ideals of graphs. This bound is inspired by the general lower bound for the regularity of ordinary powers of edge ideals of graphs given in \cite{BHT}. In fact, our proofs will be along the same line as that of \cite{BHT}.

We begin with an observation, whose proof we could not find elsewhere.

\begin{lemma} \label{lem.syminduced}
Let $G$ be a graph and let $H$ be an induced subgraph of $G$. Then for all $s \in \NN$, $I(H)^{(s)} \subseteq I(G)^{(s)}$.
\end{lemma}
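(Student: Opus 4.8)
The plan is to work entirely at the level of associated primes and the primary decomposition of squarefree monomial ideals. Recall that for the edge ideal $I(G)$ of a graph $G$, the associated primes of $I(G)$ are precisely the ideals $P_W = (x_i \mid x_i \in W)$, where $W$ ranges over the minimal vertex covers of $G$; equivalently, the minimal primes of $I(G)$ correspond under complementation to the maximal independent sets of $G$, and since $I(G)$ is squarefree it has no embedded primes, so $\Ass(I(G)) = \Min(I(G))$. Thus $I(G)^{(s)} = \bigcap_{W} P_W^s$, the intersection taken over all minimal vertex covers $W$ of $G$, and likewise $I(H)^{(s)} = \bigcap_{W'} P_{W'}^s$ over minimal vertex covers $W'$ of $H$. (One subtlety: the ambient polynomial ring for $H$ is a subring of $R$; I will regard $I(H)$ and its symbolic powers as ideals of $R$, which is harmless since extension of $P_{W'}^s$ to $R$ is again $(x_i \mid x_i\in W')^s$.)

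First I would fix an arbitrary minimal vertex cover $W$ of $G$ and show that $W \cap V(H)$ contains a minimal vertex cover $W'$ of $H$. Indeed, every edge of $H$ is an edge of $G$ (here is the only place the induced hypothesis is used — actually any subgraph suffices, but we have an induced subgraph), so $W$ covers all edges of $H$; hence $W \cap V(H)$ is a vertex cover of $H$, and we may choose a minimal vertex cover $W' \subseteq W \cap V(H)$ inside it. Then $P_{W'} \subseteq P_W$ as ideals of $R$, so $P_{W'}^s \subseteq P_W^s$, and therefore
$$I(H)^{(s)} = \bigcap_{W'' \text{ min.\ v.c.\ of } H} P_{W''}^s \subseteq P_{W'}^s \subseteq P_W^s.$$
Since this holds for every minimal vertex cover $W$ of $G$, intersecting over all such $W$ gives $I(H)^{(s)} \subseteq \bigcap_W P_W^s = I(G)^{(s)}$, which is the claim.

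There is no serious obstacle here; the lemma is essentially bookkeeping once one has the primary-decomposition description of symbolic powers of squarefree monomial ideals. The one point that warrants a sentence of care is the passage between the two polynomial rings and the assertion that a minimal vertex cover of $G$ restricts to (a superset of) a minimal vertex cover of $H$ — both are immediate from the definitions, but should be stated so the containment $P_{W'}\subseteq P_W$ is unambiguous. An alternative, slicker route avoiding vertex covers altogether: use that $I(H) = I(G) \cap R'$ where $R' = \mathbb{K}[x_i \mid x_i \in V(H)]$, together with the fact (again from the squarefree primary decomposition) that symbolic powers commute with this kind of "restriction to a coordinate subring" intersection; but I expect the vertex-cover argument above to be the cleanest to write out.
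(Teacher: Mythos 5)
Your proof is correct and follows essentially the same route as the paper: both reduce to the decomposition of $I(G)^{(s)}$ and $I(H)^{(s)}$ as intersections of $s$-th powers of the vertex-cover primes and show that every minimal prime of $I(G)$ contains a minimal prime of $I(H)$. The only (immaterial) difference is that you verify this containment combinatorially, by restricting a minimal vertex cover of $G$ to $V(H)$, whereas the paper deduces it algebraically from $p_i \supseteq I(H) = \bigcap_j q_j$ and the primeness of $p_i$.
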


\begin{proof} Let $I(G)=\bigcap\limits_{i=1}^rp_i$ and $I(H)=\bigcap\limits_{j=1}^tq_j$ be the primary decompositions of $I(G)$ and $I(H)$, respectively. Here, the $p_i$'s and $q_j$'s are prime ideals generated by collections of variables corresponding to minimal vertex covers of $G$ and $H$, respectively.

Observe that for each $1 \le i \le r$, $p_i\supseteq I(G) \supseteq I(H) = \bigcap\limits_{i=1}^tq_i$, and so there exists an integer $1\leq j\leq t$ such that $p_i\supseteq q_j$.
This implies that, for all $s \in \NN$,
$$I(G)^{(s)} = \bigcap\limits_{i=1}^r p_i^s \supseteq \bigcap\limits_{j=1}^t q_j^s = I(H)^{(s)}.$$
The lemma is proved.
\end{proof}

The two ingredients which we shall use in this section are upper-Koszul simplicial complexes associated to monomial ideals, and Nagata-Zariski's characterization of symbolic powers of radical ideals over a field of characteristic 0.

\begin{definition} Let $I \subseteq R = \mathbb{K}[x_1, \dots, x_m]$ be a monomial ideal and let $\alb=(\alpha_1,\ldots,\alpha_m) \in \NN^m$ be a $\NN^m$-graded degree. The \emph{upper-Koszul simplicial complex} associated to $I$ at degree $\alb$, denoted by $K^{\alb}(I)$, is the simplicial complex over $V = \{x_1, \dots, x_m\}$ whose faces are:
$$\Big\{W \subseteq V ~\Big|~ \dfrac{x^\alb}{\prod\limits_{u \in W} u} \in I\Big\}.$$
\end{definition}

Given a monomial ideal $I \subseteq R$, its $\NN^m$-graded Betti numbers are given by the following formula of Hochster (see \cite[Theorem 1.34]{MS}):
\begin{equation}\label{EQ1}
\beta_{i,\alb}(I) = \dim_\mathbb{K} \h_{i-1}(K^{\alb}(I);\mathbb{K}) \text{ for } i \ge 0 \text{ and } \alb \in \NN^m.
\end{equation}

In the next few results, for an integral vector $\a = (a_1, \dots, a_m) \in \NN^m$, set $|\a| = \sum_{i=1}^m a_i$.

\begin{lemma}[Nagata, Zariski] \label{lem.NZ}
Let $\mathbb{K}$ be a perfect field. Let $I$ be a radical ideal in a polynomial ring $R = \mathbb{K}[x_1, \dots, x_m]$. Then, for all $s \in \NN$, we have
$$I^{(s)} = \Big( f ~\Big|~ \dfrac{\partial^{|\a|}f}{\partial \x^\a} \in I \text{ for all } \a \in \NN^m \text{ with } |\a| \le s-1\Big).$$
\end{lemma}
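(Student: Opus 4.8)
This is the classical Zariski–Nagata theorem, and the plan is to recall its proof in the generality needed here. First I would reduce to the case where $I=\pp$ is prime. Since $I$ is radical it is the intersection $\pp_1\cap\cdots\cap\pp_r$ of its minimal primes, which are precisely the elements of $\Ass(I)$, so $I^{(s)}=\bigcap_i\pp_i^{(s)}$ with $\pp_i^{(s)}=\pp_i^sR_{\pp_i}\cap R$ (this is the reading of the definition from the introduction; for the squarefree monomial ideals to which the lemma is applied each $\pp_i$ is generated by variables, so $\pp_i^s$ is already $\pp_i$-primary and the two coincide). On the other side, $\partial^{|\a|}f/\partial\x^\a\in I$ holds for all $|\a|\le s-1$ if and only if it holds with $I$ replaced by each $\pp_i$; hence both sides of the asserted identity are intersections over $i$ of the corresponding objects attached to $\pp_i$, and it suffices to prove $\pp^{(s)}=\{\,f\mid \partial^{|\a|}f/\partial\x^\a\in\pp\text{ for all }|\a|\le s-1\,\}$ for a single prime $\pp$.

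The inclusion ``$\subseteq$'' requires no hypothesis on $\mathbb{K}$ and is just the Leibniz rule localized at $\pp$: each $\partial/\partial x_j$ extends to a derivation of $R_\pp$, and an easy induction using Leibniz gives $(\partial/\partial x_j)(\pp^tR_\pp)\subseteq\pp^{t-1}R_\pp$ for every $t\ge 1$. Iterating, if $f\in\pp^{(s)}=\pp^sR_\pp\cap R$ and $|\a|\le s-1$ then $\partial^{|\a|}f/\partial\x^\a\in\pp^{\,s-|\a|}R_\pp\subseteq\pp R_\pp$, and contracting back along $R\hookrightarrow R_\pp$ (where $\pp R_\pp\cap R=\pp$) yields $\partial^{|\a|}f/\partial\x^\a\in\pp$.

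For the reverse inclusion I would work in the local ring $R_\pp$, which is regular because $\operatorname{Spec}R$ is smooth. Perfectness of $\mathbb{K}$ enters exactly here: the residue field $\kappa(\pp)=R_\pp/\pp R_\pp$ is then separably generated over $\mathbb{K}$, so the conormal sequence
\[
0\longrightarrow \pp R_\pp/\pp^2R_\pp \longrightarrow \Omega_{R/\mathbb{K}}\otimes_R\kappa(\pp) \longrightarrow \Omega_{\kappa(\pp)/\mathbb{K}}\longrightarrow 0
\]
is exact, with $\Omega_{R/\mathbb{K}}\otimes_R\kappa(\pp)$ free of rank $m$ on the classes of $dx_1,\dots,dx_m$ and $\dim_{\kappa(\pp)}\pp R_\pp/\pp^2R_\pp=\height\pp=:h$. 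Dualizing, some $h$ of the derivations $\partial/\partial x_1,\dots,\partial/\partial x_m$ — after a suitable $\mathbb{K}$-linear change of the $x_j$, say $\partial/\partial x_1,\dots,\partial/\partial x_h$ — restrict to a $\kappa(\pp)$-basis of $(\pp R_\pp/\pp^2R_\pp)^{*}$. Choosing a regular system of parameters $u_1,\dots,u_h$ of $R_\pp$ dual to that basis and, by Cohen's theorem, a coefficient field of the completion $\widehat{R_\pp}$, every $f$ has a formal expansion $f=\sum_{\mathbf b}c_{\mathbf b}u^{\mathbf b}$ with $f\in\pp^sR_\pp\iff c_{\mathbf b}=0$ for all $|\mathbf b|\le s-1$. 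Applying $\partial^{|\mathbf b|}/\partial u^{\mathbf b}$ and reducing modulo $\pp$ isolates $\mathbf b!\,c_{\mathbf b}$; and since the $\partial/\partial u_i$ and the chosen $\partial/\partial x_j$ differ by a matrix invertible modulo $\pp$, the hypothesis ``$\partial^{|\a|}f/\partial\x^\a\in\pp$ for all $|\a|\le s-1$'' forces the analogous statement for $\mathbf u$-derivatives, hence $c_{\mathbf b}=0$ for $|\mathbf b|\le s-1$ and $f\in\pp^sR_\pp\cap R=\pp^{(s)}$.

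The step I expect to be the real obstacle is the last one: passing from $\partial^{|\mathbf b|}f/\partial u^{\mathbf b}\in\pp$ to $c_{\mathbf b}=0$ uses that the multinomials $\mathbf b!$ are units, which is precisely where $\chara\mathbb{K}=0$ is needed — over a perfect field of positive characteristic the ordinary partials must be replaced by Hasse–Schmidt (divided-power) derivations. Accordingly I would either prove the lemma under the hypothesis $\chara\mathbb{K}=0$, which is all that is invoked in the sequel, or cite the Hasse–Schmidt refinement; in either case this is the classical theorem of Zariski and Nagata (see also Eisenbud–Hochster), which I would most likely simply quote.
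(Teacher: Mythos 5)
The paper offers no proof of this lemma at all --- it is invoked as the classical Zariski--Nagata theorem --- so there is no argument to compare yours with step by step; your sketch is the standard one (reduction to the minimal primes of the radical ideal, the Leibniz-rule inclusion $\subseteq$, and the coefficient-field/completion computation in $\widehat{R_\pp}$ for $\supseteq$), and it is essentially correct in characteristic $0$. Your closing caveat is the one point worth recording: with ordinary partial derivatives the stated identity is genuinely false over a perfect field of characteristic $p>0$ (in $\mathbb{F}_p[x]$ the element $x^p$ has all partials of positive order equal to $0$, so it lies in the right-hand side for every $s$, yet $x^p\notin (x)^{(p+1)}=(x^{p+1})$); the correct statement for perfect fields replaces iterated partials by Hasse--Schmidt (divided-power) differential operators of order at most $s-1$. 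Note, however, that you cannot quite dismiss this by ``assuming $\chara\mathbb{K}=0$, which is all that is invoked in the sequel'': in Lemma \ref{lem.betainduced} the ground field is only reduced to a \emph{perfect} one by faithful flatness, not to characteristic $0$, so strictly speaking the criterion should be quoted there in its Hasse--Schmidt form (or one can bypass differential operators entirely for squarefree monomial ideals, where membership of a monomial in $I^{(s)}=\bigcap\pp^s$ is read off degree-wise from the minimal vertex covers, which is all that argument needs). Apart from this characteristic subtlety, which you identified yourself, your proof is the expected classical argument and could equally well be replaced by a citation, as the paper in effect does.
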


Using (\ref{EQ1}) and Lemma \ref{lem.NZ} we obtain the following lemma, which is an analogue of \cite[Lemma 4.2]{BHT}, that was given for Betti numbers of powers of edge ideals.

\begin{lemma}\label{lem.betainduced}
Let $G$ be a graph and let $H$ be an induced subgraph of $G$. Then for any $s \ge 1$ and any $i,j \ge 0$, we have
$$\beta_{i,j}(I(H)^{(s)}) \le \beta_{i,j}(I(G)^{(s)}).$$
\end{lemma}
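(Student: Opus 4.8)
The plan is to imitate the proof of \cite[Lemma 4.2]{BHT}, but using the upper-Koszul complex description of Betti numbers in (\ref{EQ1}) together with the containment $I(H)^{(s)} \subseteq I(G)^{(s)}$ from Lemma \ref{lem.syminduced}. By Hochster's formula, it suffices to produce, for every $\NN^m$-graded degree $\alb$, a map on reduced homology of upper-Koszul complexes that forces $\dim_\mathbb{K} \h_{i-1}(K^\alb(I(H)^{(s)})) \le \dim_\mathbb{K}\h_{i-1}(K^\alb(I(G)^{(s)}))$. The subtlety is that $I(H)$ and $I(G)$ live in polynomial rings on different variable sets: $H$ is an induced subgraph of $G$, so if $V(G) = \{x_1,\dots,x_m\}$ and $V(H)$ corresponds to a subset of these variables, then $I(H)$ is naturally an ideal of the subring $S = \mathbb{K}[x_i : x_i \in V(H)]$, and its symbolic power $I(H)^{(s)}$ should be computed either in $S$ or, extended, in $R$. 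One first checks that the extension of $I(H)^{(s)}$ to $R$ equals the $s$-th symbolic power of the extension $I(H)R$ (a prime decomposition of $I(H)R$ is obtained by adjoining the "extra" variables in $V(G)\setminus V(H)$ to each minimal prime of $I(H)$), so that passing between $S$ and $R$ does not change graded Betti numbers.

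Concretely, first I would reduce to comparing $K^\alb(I(H)^{(s)})$ and $K^\alb(I(G)^{(s)})$ as simplicial complexes on the common vertex set $V(G)$. If $\alb$ has a positive coordinate in a variable of $V(G)\setminus V(H)$, then $x^\alb$ is not in $I(H)^{(s)}R$ unless it remains in after stripping that variable; a direct case analysis using the explicit primary decomposition of $I(H)R$ shows $\beta_{i,\alb}(I(H)^{(s)}) = 0$ in that situation, so the inequality holds trivially. Hence I may assume $\alb$ is supported on the variables of $H$. In that case the key containment-of-faces statement is that $K^\alb(I(H)^{(s)}) \subseteq K^\alb(I(G)^{(s)})$: for $W \subseteq V(G)$, if $x^\alb/\prod_{u\in W}u \in I(H)^{(s)}$ then it lies in $I(G)^{(s)}$ by Lemma \ref{lem.syminduced}. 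But a containment of simplicial complexes alone does not give an inequality of reduced homology dimensions, so — exactly as in \cite{BHT} — one needs that $K^\alb(I(H)^{(s)})$ is in fact the restriction (full subcomplex) of $K^\alb(I(G)^{(s)})$ to an appropriate vertex subset, or more precisely that the two complexes agree after restricting to the variables appearing in $\alb$ together with a combinatorial cone argument for the remaining variables. I would argue: a face $W$ of $K^\alb(I(G)^{(s)})$ with $W \subseteq \supp(\alb) \subseteq V(H)$ satisfies $x^\alb/\prod_{u\in W}u \in I(G)^{(s)}$, and since this monomial is supported only on $V(H)$, applying Nagata--Zariski (Lemma \ref{lem.NZ}) — or directly the primary decompositions — one recovers that it already lies in $I(H)^{(s)}$; thus $K^\alb(I(H)^{(s)})$ and $K^\alb(I(G)^{(s)})$ have the same faces contained in $\supp(\alb)$. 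Standard simplicial-complex homology arguments (coning off the extra vertices, as in \cite[Lemma 4.2]{BHT}) then yield $\dim_\mathbb{K}\h_{i-1}(K^\alb(I(H)^{(s)})) \le \dim_\mathbb{K}\h_{i-1}(K^\alb(I(G)^{(s)}))$, and Hochster's formula finishes the proof.

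I expect the main obstacle to be the bookkeeping around the ambient ring: making precise, and verifying carefully, that symbolic powers behave well under the extension $S \hookrightarrow R$ for the edge ideal of an induced subgraph, and that restricting the upper-Koszul complex to $\supp(\alb)$ does not lose homology — i.e. that the "cone off the redundant variables" step genuinely applies here. This is exactly the "additional subtlety in examining upper-Koszul complexes associated to symbolic powers of an edge ideal and that of the edge ideal of an induced subgraph" flagged in the introduction. The comparison at degrees $\alb$ not supported on $V(H)$, and the use of Lemma \ref{lem.syminduced}, are routine once this is set up. Note that, as in \cite{BHT}, characteristic-0 (or perfectness, for the Nagata--Zariski step) is only needed if one insists on the differential-operator description; the primary-decomposition description of $I^{(s)}$ used throughout the paper works over any field, so I would phrase the face-comparison purely in terms of the $p_i^s$ and $q_j^s$ to keep the argument characteristic-free.
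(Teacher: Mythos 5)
Your proposal is correct and, at its core, is the same argument as the paper's: Hochster's formula (\ref{EQ1}), the containment $I(H)^{(s)} \subseteq I(G)^{(s)}$ of Lemma \ref{lem.syminduced}, and the key observation that in multidegrees $\alb$ with $\supp(\alb) \subseteq V(H)$ the two upper-Koszul complexes actually \emph{coincide}, after which one sums $\beta_{i,\alb}$ over $|\alb| = j$. Two comments. First, you can discard the worry about restrictions, full subcomplexes and ``coning off'' extra vertices: a face $W$ of $K^{\alb}(I)$ necessarily satisfies $W \subseteq \supp(\alb)$ (otherwise $x^{\alb}/\prod_{u \in W} u$ is not a monomial of $R$), so once $\supp(\alb) \subseteq V(H)$ the statement ``same faces contained in $\supp(\alb)$'' is literally the equality $K^{\alb}(I(H)^{(s)}) = K^{\alb}(I(G)^{(s)})$, which is exactly what the paper proves; degrees with $\supp(\alb) \not\subseteq V(H)$ contribute nothing to $\beta_{i,j}(I(H)^{(s)})$ simply because the minimal free resolution of $I(H)^{(s)}R$ is the flat extension of the one over $\mathbb{K}[x_i : x_i \in V(H)]$. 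Second, one genuine slip in your bookkeeping: the minimal primes of $I(H)R$ are the extensions $q_jR$ of the minimal primes of $I(H)$ (generated by minimal vertex covers of $H$ alone), \emph{not} those primes with the variables of $V(G)\setminus V(H)$ adjoined; with the correct decomposition, the fact that extension commutes with taking symbolic powers is immediate. Where you do genuinely diverge from the paper is in preferring the primary-decomposition verification of the key equality over the Nagata--Zariski one: for a monomial $g$ with $\supp(g) \subseteq V(H)$ lying in $I(G)^{(s)}$, each minimal vertex cover $C$ of $H$ extends to the vertex cover $C \cup (V(G)\setminus V(H))$ of $G$ (inducedness is used here), and since $g$ involves no variable outside $V(H)$ one gets $g \in (C)^s$; this is valid, avoids the reduction to a perfect field and the differential-operator criterion of Lemma \ref{lem.NZ}, and so is marginally cleaner and characteristic-free, whereas the paper's route via Lemma \ref{lem.NZ} mirrors \cite[Lemma 4.2]{BHT} more closely.
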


\begin{proof} Since field extensions are faithfully flat, we may assume that $\mathbb{K}$ is perfect. Let $m = |V(G)|$ and we shall view $I(G)$ and $I(H)$ both as ideals in the polynomial ring $R = \mathbb{K}[x_1, \dots, x_m]$.
For an $\NN^m$-graded degree $\alb=(\alpha_1,\dots,\alpha_m)$, by Lemmas \ref{lem.syminduced} and \ref{lem.NZ}, it is easy to see that $K^{\alb}(I(H)^{(s)}) \subseteq K^{\alb}(I(G)^{(s)})$.

Let $\supp(\alb) = \{x_i ~|~ \alpha_i \not= 0\}$ be the support of $\alb$. Observe that if $\supp(\alb) \subseteq V(H)$ and $W \in K^{\alb}(I(G)^{(s)})$ then
$$g = \dfrac{\x^\alb}{\prod_{x \in W} x} \in I(G)^{(s)}.$$
By Lemma \ref{lem.NZ}, this is equivalent to the condition that for all $\a \in \NN^m$ with $|\a| \le s-1$, we have
$$\dfrac{\partial^{|\a|} g}{\partial \x^{\a}} \in I(G).$$
Observe that since $\supp(\alb) \subseteq V(H)$, we have $\supp g \subseteq V(H)$. This, together with the fact that $H$ is an induced subgraph of $G$, implies that for all $\a \in \NN^m$ with $|\a| \le s-1$, we have
$$\dfrac{\partial^{|\a|} g}{\partial \x^{\a}} \in I(H).$$
That is, $W \in K^{\alb}(I(H)^{(s)})$. Thus, $K^{\alb}(I(H)^{(s)}) = K^{\alb}(I(G)^{(s)})$. Therefore, it follows from $(\ref{EQ1})$ that if $\supp(\alb) \subseteq V(H)$ then
$$\beta_{i,\alb}(I(H)^{(s)}) = \dim_\mathbb{K} \h_{i-1}(K^{\alb}(I(H)^{(s)}); \mathbb{K}) = \dim_\mathbb{K} \h_{i-1}(K^{\alb}(I(G)^{(s)}); \mathbb{K}) = \beta_{i,\alb}(I(G)^{(s)}).$$

We now have
\begin{align*} \beta_{i,j}(I(H)^{(s)}) &=\sum_{\alb\in\NN^m, \ \supp(\alb)\subseteq V(H), |\alb| = j} \beta_{i,\alb}(I(H)^{(s)})=\sum_{\alb\in\NN^m, \ \supp(\alb)\subseteq V(H), |\alb| = j} \beta_{i,\alb}(I(G)^{(s)})\\
&\le \sum_{\alb\in\NN^m, \ |\alb| = j} \beta_{i,\alb}(I(G)^{(s)}) = \beta_{i,j}(I(G)^{(s)}).
\end{align*}
\end{proof}

\begin{corollary} \label{cor.reginduced}
Let $G$ be a graph and let $H$ be an induced subgraph of $G$. Then, for all $s \ge 1$,
$$\reg I(H)^{(s)} \le \reg I(G)^{(s)}.$$
\end{corollary}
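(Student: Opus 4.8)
The plan is to deduce Corollary \ref{cor.reginduced} directly from Lemma \ref{lem.betainduced} by recalling the characterization of Castelnuovo--Mumford regularity in terms of graded Betti numbers. First I would write
$$\reg M = \max\{j - i ~\big|~ \beta_{i,j}(M) \neq 0\}$$
for a finitely generated graded module $M$ over $R = \mathbb{K}[x_1,\dots,x_m]$, applied to $M = I(H)^{(s)}$ and $M = I(G)^{(s)}$ (both viewed as ideals in the same ambient ring, as in the proof of Lemma \ref{lem.betainduced}).

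The key step is the following monotonicity observation: suppose $j - i = \reg I(H)^{(s)}$ realizes the maximum, so $\beta_{i,j}(I(H)^{(s)}) \neq 0$. By Lemma \ref{lem.betainduced}, $\beta_{i,j}(I(G)^{(s)}) \geq \beta_{i,j}(I(H)^{(s)}) > 0$, hence the pair $(i,j)$ contributes to $\reg I(G)^{(s)}$ as well, giving $\reg I(G)^{(s)} \geq j - i = \reg I(H)^{(s)}$. That is the whole argument; there is essentially no obstacle, since the inequality $\beta_{i,j}(I(H)^{(s)}) \le \beta_{i,j}(I(G)^{(s)})$ from Lemma \ref{lem.betainduced} does all the work, and regularity being a max over the (nonempty, finite) set of $(i,j)$ with nonzero Betti number makes the passage immediate.

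The only point requiring a word of care is that both modules must be regarded over the \emph{same} polynomial ring $R$ for the Betti numbers $\beta_{i,j}(I(H)^{(s)})$ to be the ones referenced in Lemma \ref{lem.betainduced}; this is harmless because regularity is insensitive to adjoining extra variables that do not appear in the module's minimal free resolution, a fact already implicitly used in Lemma \ref{lem.betainduced}. With that understood, the corollary follows in two lines, and I would present it as such.
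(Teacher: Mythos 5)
Your proof is correct and matches the paper's (implicit) argument: the corollary is intended to follow immediately from Lemma \ref{lem.betainduced} via the characterization $\reg M = \max\{j-i \mid \beta_{i,j}(M) \neq 0\}$, exactly as you wrote, with the same harmless identification of both ideals as ideals of the larger polynomial ring.
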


Our next main result, which gives a general linear lower bound for the regularity function of symbolic powers of edge ideals, is stated as follows. The statement of Theorem \ref{thm.genbound} is inspired by that of \cite[Theorem 4.5]{BHT}.

\begin{theorem} \label{thm.genbound}
Let $G$ be a simple graph with edge ideal $I = I(G)$. Let $\nu(G)$ denote the induced matching number of $G$. Then, for any $s \in \NN$, we have
$$\reg I^{(s)} \ge 2s + \nu(G) -1.$$
\end{theorem}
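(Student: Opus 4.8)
The plan is to reduce to the case where $G$ itself is an induced matching $\nu(G) = \nu$ disjoint edges, and then exhibit an explicit nonzero Betti number witnessing regularity at least $2s + \nu - 1$. By Corollary \ref{cor.reginduced}, if $H$ is the induced subgraph of $G$ on the $2\nu$ vertices of a maximum induced matching, then $\reg I(G)^{(s)} \ge \reg I(H)^{(s)}$, so it suffices to prove the bound when $G = H$ is a disjoint union of $\nu$ edges $\{x_1y_1\}, \dots, \{x_\nu y_\nu\}$. In this case $I(H) = (x_1y_1, \dots, x_\nu y_\nu)$ is a complete intersection of $\nu$ quadrics in $2\nu$ variables, and since the associated primes are pairwise comaximal, $I(H)^{(s)} = \bigcap_i (x_i, y_i)^s = (x_1y_1)^{a_1}\cdots$—more precisely $I(H)^{(s)}$ is generated by monomials of the form $\prod_i m_i$ where each $m_i \in (x_i,y_i)^s$, which by the comaximality is simply $I(H)^s$ again; one checks $I(H)^{(s)} = I(H)^s$ for a complete intersection. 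Thus I need $\reg I(H)^s \ge 2s + \nu - 1$.

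**Next I would** compute $\reg I(H)^s$ directly. Since $I(H)$ is generated by a regular sequence of $\nu$ quadrics, $R/I(H)^s$ has a well-understood resolution; alternatively, following the philosophy of \cite[Theorem 4.5]{BHT}, I would locate a specific multidegree $\alb$ and homological index $i$ for which Hochster's formula (\ref{EQ1}) gives $\beta_{i,\alb}(I(H)^{(s)}) \ne 0$ with $|\alb| - i = 2s + \nu - 1$. The natural candidate is the top socle-type degree: take $\alb$ corresponding to the monomial $x_1^{s+1}y_1^{s+1} \cdot x_2 y_2 \cdots x_\nu y_\nu$ (or a similar near-extremal choice), so that $|\alb| = 2(s+1) + 2(\nu - 1) = 2s + 2\nu$, and show the upper-Koszul complex $K^{\alb}(I(H)^{(s)})$ has nonvanishing reduced homology in degree $i - 1 = \nu$, giving $\reg \ge |\alb| - i = 2s + \nu - 1$. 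Working coordinate-by-coordinate and using the Künneth-type behavior of upper-Koszul complexes under the disjoint union decomposition $R = \bigotimes_i \mathbb{K}[x_i,y_i]$ should make this homology computation a product of $\nu$ copies of a single one-edge calculation, where the edge ideal is just $(x_iy_i)$ and its powers are principal.

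**The subtlety I expect** — exactly the "additional subtlety" the authors flag — is that passing to the induced subgraph $H$ changes the ambient ring: $I(H)^{(s)}$ as an ideal of $R = \mathbb{K}[x_1,\dots,x_m]$ is not the same as $I(H)^{(s)}$ computed in the smaller ring $\mathbb{K}[V(H)]$, because the symbolic power involves intersecting over associated primes, and a minimal vertex cover of $H$ need not extend canonically. Lemma \ref{lem.syminduced} and Lemma \ref{lem.betainduced} are precisely designed to handle this — the point being that $\beta_{i,\alb}$ in degrees $\alb$ supported on $V(H)$ agree whether computed in $R$ or in $\mathbb{K}[V(H)]$, since by Lemma \ref{lem.NZ} the differential-operator characterization localizes to those variables. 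So the real work is: (i) confirm $I(H)^{(s)} = I(H)^s$ for the complete intersection $H$ (this is standard, e.g. symbolic and ordinary powers agree for ideals generated by a regular sequence, or directly from comaximality of the $(x_i,y_i)$); and (ii) the explicit nonvanishing-homology computation for $K^{\alb}\big((x_1y_1, \dots, x_\nu y_\nu)^s\big)$ at the chosen degree. I expect (ii) to be the genuine obstacle — pinning down the right $\alb$ so the reduced homology is visibly nonzero and the shift $|\alb| - i$ hits $2s + \nu - 1$ on the nose — but it reduces to a one-variable-pair model that can be done by hand.
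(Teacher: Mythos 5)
Your reduction is exactly the paper's: restrict to the induced subgraph $H$ on a maximum induced matching and apply Corollary \ref{cor.reginduced}, so everything comes down to the single edge ideal $I(H)=(x_1y_1,\dots,x_\nu y_\nu)$. At that point the paper simply invokes \cite[Lemma 4.4]{BHT}, which says that for this complete intersection $\reg I(H)^{(s)}=\reg I(H)^s=2s+\nu-1$, and is done. The gaps in your proposal are all in your attempt to redo that cited ingredient by hand.

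Two of the steps you sketch would fail as stated. First, the associated primes of $I(H)$ are not the pairwise comaximal ideals $(x_i,y_i)$ --- these do not even contain $I(H)$; the minimal primes are the $2^\nu$ ideals obtained by choosing one endpoint from each edge (the minimal vertex covers). The conclusion $I(H)^{(s)}=I(H)^s$ is still true (powers of a complete intersection are unmixed and $I(H)$ localizes to $\pp R_\pp$ at each minimal prime $\pp$), but your justification has to be replaced. Second, and more seriously, the Betti witness you propose cannot exist: you want $\beta_{\nu+1,\alb}\bigl(I(H)^{s}\bigr)\neq 0$ with $|\alb|=2s+2\nu$, but $R/I(H)^s$ is Cohen--Macaulay (a power of a regular sequence), so by Auslander--Buchsbaum $\pd I(H)^s=\nu-1$ and every Betti number in homological degree at least $\nu$ vanishes. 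The extremal Betti number sits instead at homological degree $\nu-1$ in total degree $2s+2\nu-2$ (for instance in multidegrees $x_1^{a_1}y_1^{a_1}\cdots x_\nu^{a_\nu}y_\nu^{a_\nu}$ with all $a_i\ge 1$ and $\sum_i a_i=s+\nu-1$), which still gives $|\alb|-i=2s+\nu-1$, as it must since for a Cohen--Macaulay module the regularity is attained at the last homological position. Relatedly, the hoped-for K\"unneth splitting of $K^{\alb}(I(H)^s)$ into one-edge factors is not available: a monomial $\prod_i x_i^{a_i}y_i^{b_i}$ lies in $I(H)^s$ if and only if $\sum_i\min(a_i,b_i)\ge s$, a condition that couples the blocks, so the upper-Koszul complex is a threshold-type complex rather than a join (and in the one-edge model the ideal is principal, so it has no higher homology to contribute anyway). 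The clean repair is either to quote the standard formula $\reg J^s=2s+\nu-1$ for an ideal $J$ generated by a regular sequence of $\nu$ quadrics --- precisely \cite[Lemma 4.4]{BHT}, the route the paper takes --- or, if you insist on a bare-hands argument, to exhibit the nonvanishing Betti number at the corrected position $(i,|\alb|)=(\nu-1,\,2s+2\nu-2)$.
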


\begin{proof} Let $r=\nu(G)$. Suppose that $\{u_1v_1, \ldots, u_rv_r\}$ is an induced matching in $G$. Let $H$ be the induced subgraph of $G$ on the vertices $\bigcup_{i=1}^r\{u_i,v_i\}$. Then, $I(H)=(u_1v_1, \ldots, u_rv_r)$ is a complete intersection. Thus, for all $s \in \NN$, by \cite[Lemma 4.4]{BHT}, we have
$$\reg I(H)^{(s)} = \reg I(H)^s = 2s+r-1.$$
The conclusion now follows from Corollary \ref{cor.reginduced}.
\end{proof}

\begin{remark} It was communicated to the second named author in \cite{MV} that Minh and Vu have obtained in their unpublished work the same bound as that of Theorem \ref{thm.genbound}.
\end{remark}

\begin{example} In general, we expect that $\reg I(G)^{(s)}$ can be arbitrarily larger than the lower bound of $2s+\nu(G)-1$. Let $T$ be a graph satisfying the conditions of \cite[Theorem 3.8]{ABS}. That is, $T$ is a unicyclic graph and $\reg I(T) = \nu(T) + 2$. Fix any integer $t > 0$ and let $G$ be the disjoint union of $t$ copies of $T$. Clearly, $\nu(G) = t\nu(T)$. It also follows from \cite[Theorem 2.4]{HTT} and \cite[Theorem 1.1]{NV} that
$$\reg I(G)^s = 2s + t\reg I(T) - (t+1) = 2s + t\nu(T) + (t-1) = 2s + \nu(G) - 1 + t.$$
By the conjectured equality (\ref{eq.Minh}), we expect $\reg I(G)^{(s)}$ to be also equal to $2s + \nu(G) - 1 + t$.
\end{example}


\section{Regularity of symbolic powers of odd cycles} \label{sec.regcycle}

The aim of this section is to compute the regularity function $\reg I(G)^{(s)}$, for $s \in \NN$, when $G = C_{2n+1}$ is an odd cycle over the vertices $V(G) = \{x_1, \dots, x_{2n+1}\}$.

We shall make use of the decomposition of $I(G)^{(s)}$ given in Theorem \ref{thm.decomposition}. Particularly, let $I = I(C_{2n+1})$ be the edge ideal of $C_{2n+1}$. For any $s \in \NN$, write $s = k(n+1) + r$ for some $0 \le r \le n$. Then, by Theorem \ref{thm.decomposition}, we have $I^{(s)} = I^s + J$, where
$$J = \sum_{i=1}^k I^{s-i(n+1)} (x_1 \cdots x_{2n+1})^i.$$
Let $w = x_1 \cdots x_{2n+1}$. Then, it is easy to see that $J = wI^{(s-(n+1))}.$

Let $\mm = (x_1, \dots, x_{2n+1})$ denote the maximal homogeneous ideal in $R$. The following lemma is essential for our computation of $\reg I^{(s)}$.

\begin{lemma} \label{lem.IsJ}
Let $I = I(C_{2n+1})$ be the edge ideal of an odd cycle. For a fixed $s \in \NN$, write $s = k(n+1)+r$ for some $0 \le r \le n$ and let $J$ be as before. Then
$$I^s \cap J = w\mm I^{s-(n+1)}.$$
\end{lemma}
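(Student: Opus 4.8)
I want to prove the equality $I^s \cap J = w\mm I^{s-(n+1)}$, where $J = wI^{(s-(n+1))}$ and $w = x_1\cdots x_{2n+1}$. The inclusion $\supseteq$ should be the easy direction: since $\mm I^{s-(n+1)} \subseteq I^{s-(n+1)+1} = I^{s-n}$? No — that is false in general since $\mm$ is not contained in $I$. Instead, I will argue $\supseteq$ by noting that each generator $x_j \cdot M$ with $M \in I^{s-(n+1)}$ a product of $s-(n+1)$ edges satisfies $wx_jM = w\cdot(x_jM)$; on one hand $w x_j M \in w I^{s-(n+1)} \subseteq wI^{(s-(n+1))} = J$, and on the other hand I must exhibit $wx_jM$ as a product of $s$ edges of $C_{2n+1}$ times nothing, i.e. show $wx_jM \in I^s$. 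Here $w$ itself is $x_j$ times a product of $n$ edges of the cycle (walk around $C_{2n+1}$ starting and ending at $x_j$, using $2n$ of the remaining vertices in $n$ edges), so $wx_j = x_j^2\cdot(\text{$n$ edges})$, and since $x_j^2$ divides this we can absorb it: $wx_jM = (x_j^2)\cdot(\text{$n$ edges})\cdot M$, and $x_j^2$ is not an edge, but $x_j^2 \cdot (\text{an edge incident to } x_j)$... hmm, this needs care. The cleaner statement: $w \in \mm \cdot I^n$ and more precisely for each vertex $x_j$, $w/x_j \in I^n$ (it is a product of $n$ edges covering the path $C_{2n+1} - x_j$). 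Hence $w x_j M = x_j^2 (w/x_j) M \in x_j^2 I^n I^{s-(n+1)} = x_j^2 I^{s-1}$. Now $x_j^2 I^{s-1} \subseteq I^s$ because $x_j^2$ times any product of $s-1$ edges: pick an edge $x_jx_{j+1}$ in that product (if none is incident to $x_j$, that is fine — we don't need it) — actually $x_j^2 \cdot I^{s-1} \subseteq I^s$ is simply FALSE (e.g. $x_1^2 x_2 x_3 \notin I^2$ for a triangle). So this route is wrong too, and I need to be more careful about which edges appear.

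**The correct route for $\supseteq$.** The right observation is that $w$ is itself a product of $n$ edges of $C_{2n+1}$ times one extra vertex: writing $C_{2n+1} = (x_1,\dots,x_{2n+1})$, we have $w = (x_1x_2)(x_3x_4)\cdots(x_{2n-1}x_{2n})\cdot x_{2n+1}$, a product of $n$ edges and the vertex $x_{2n+1}$; similarly for any vertex $x_j$ there is such an expression with $x_j$ as the leftover vertex. So $w \in \mm I^n$, with the membership witnessed for each choice of leftover vertex. Then $w\mm I^{s-(n+1)} \subseteq (\mm I^n)\mm I^{s-(n+1)} \subseteq \mm^2 I^{s-1}$, which does not obviously land in $I^s$. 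The actual argument must instead be: a typical generator of $w\mm I^{s-(n+1)}$ is $w x_j e_1\cdots e_{s-n-1}$ with $e_\ell$ edges; using $w = x_j \cdot (\text{$n$ edges } f_1\cdots f_n)$ we get $w x_j e_1\cdots e_{s-n-1} = x_j^2 f_1\cdots f_n e_1 \cdots e_{s-n-1}$. This is $x_j^2$ times a product of $s-1$ edges, hence divisible by $x_j$ times a product of $s-1$ edges one of which ($f_1$, say, chosen incident to $x_j$) times $x_j$ gives... I think the honest resolution is: choose the expression $w = x_j\cdot(f_1\cdots f_n)$ so that $f_1$ is an edge incident to $x_j$; then $x_j f_1 = x_j\cdot x_jx_{j\pm1}$, wait $f_1$ already contains $x_j$. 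Let me restart: $w/x_j$ is a product of $n$ edges spanning the $2n$ vertices other than $x_j$; none of these edges is incident to $x_j$. So $w x_j M = x_j^2\cdot(w/x_j)\cdot M$ where $(w/x_j)M$ is a product of $s-1$ edges none incident to $x_j$ — this genuinely need not be in $I^s$. Therefore $\supseteq$ as I've set it up is suspect, and I suspect the paper actually uses a different, smaller generating set for $w\mm I^{s-(n+1)}$, or uses that $r \le n$ forces $s - (n+1) = (k-1)(n+1) + r$ so $I^{s-(n+1)} = I^{(s-(n+1))}$ only when $k=1$; in general one needs the decomposition recursively. \emph{The main obstacle I anticipate is exactly pinning down the $\supseteq$ inclusion}, since $w$ multiplied by $\mm$ must be shown to add a full extra power of $I$, which uses the cycle structure delicately; I expect the paper handles it by writing elements of $I^{(s-(n+1))}$ in the decomposed form and checking term by term that $w \cdot x_j \cdot (\text{term})$ has enough edges, exploiting that $w \cdot x_j$ restricted to the cycle is $x_j^2$ times $n$ edges and that these $n$ edges together with $x_j^2$ can be repartitioned as $n+1$ edges of the cycle \emph{after} absorbing a neighboring edge from the remaining factors, whenever the total edge-count and vertex multiplicities permit — which is where $r\le n$ enters.

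**The route for $\subseteq$.** Take a monomial $M \in I^s \cap J$. From $M \in J = wI^{(s-(n+1))}$ write $M = wN$ with $N \in I^{(s-(n+1))}$. I must show $N \in \mm I^{s-(n+1)}$, equivalently $M = wN$ with $N$ divisible by some variable times a product of $s-(n+1)$ edges — equivalently $N/x_j \in I^{s-(n+1)}$ for some $j$, or just $N \in \mm I^{s-(n+1)}$. Since $M = wN \in I^s$, $M$ is divisible by a product $e_1\cdots e_s$ of $s$ edges of $C_{2n+1}$. Now I compare exponent vectors: $\deg M = \deg w + \deg N = (2n+1) + \deg N$, and $e_1\cdots e_s$ has degree $2s = 2k(n+1)+2r$. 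The combinatorial heart is: the $2s$-element multiset $\{$endpoints of $e_1,\dots,e_s\}$ sits inside the support-with-multiplicity of $wN$; since $w$ contributes exactly one copy of each $x_i$, the "excess" beyond $w$ that the $e_i$'s demand must be supplied by $N$, and one shows this excess is at least a product of $s-(n+1)$ edges \emph{plus one more variable}. Concretely, among $e_1,\dots,e_s$, at most $n$ of them can be "disjointly packed" using only the single copies from $w$ (since a matching in $C_{2n+1}$ has size $\le n$), so at least $s-n$ of the edges need a repeated vertex, i.e. a vertex of $N$; pushing this counting carefully yields $N \in \mm I^{s-n-1}$. I would make this precise by induction on $s$ (base case small $s$ where $I^{(s)}=I^s$ from Lemma~\ref{lem.n+1}), using at the inductive step the decomposition $I^{(s-(n+1))} = I^{s-(n+1)} + wI^{(s-2(n+1))}$ to peel off another $w$, and the observation $w^2\mm \subseteq$ (product of $2(n+1)$ edges times $\mm$)... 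The cleanest formulation: prove $I^s \cap wR \subseteq w\mm I^{s-(n+1)}$ as monomial ideals by showing every monomial in $I^s$ divisible by $w$ is divisible by $w$ times a nonconstant monomial in $I^{s-(n+1)}$, via the matching-number bound $\nu(C_{2n+1}) = n$ together with $\beta(C_{2n+1}) = n$. \emph{I expect this counting step — showing $M/w \in \mm I^{s-(n+1)}$ purely from $M \in I^s$ and $w \mid M$ — to be routine but notation-heavy, and the genuinely delicate point to remain the $\supseteq$ direction discussed above.}
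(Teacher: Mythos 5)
Your proposal does not close either direction of the equality, and the place you flag as ``genuinely delicate'' is actually the easy step, resolved by a one-line fact you missed. For $\supseteq$: for any vertex $x_j$ one has $x_jw = (x_jx_{j+1})(x_{j+2}x_{j+3})\cdots(x_{j-1}x_j)$, a product of exactly $n+1$ edges of $C_{2n+1}$ --- the doubled vertex $x_j$ is absorbed into the first and the last edge of the walk around the odd cycle, which is precisely where oddness enters. Hence $\mm w \subseteq I^{n+1}$, so $w\mm I^{s-(n+1)} \subseteq I^{n+1}I^{s-(n+1)} = I^s$, and of course $w\mm I^{s-(n+1)} \subseteq wI^{s-(n+1)} \subseteq wI^{(s-(n+1))} = J$. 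Your attempts all factored $w$ as $x_j\cdot (w/x_j)$ with $w/x_j$ a product of $n$ edges avoiding $x_j$, which cannot see this pairing; no repartitioning of edges, no use of $r\le n$, and no recursion is needed for this inclusion.

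The $\subseteq$ direction is also only sketched: the matching-number counting and the induction on $s$ are not carried out, and your ``cleanest formulation'' $I^s \cap wR \subseteq w\mm I^{s-(n+1)}$ is a stronger statement asserted without proof. The paper's argument is much lighter than what you anticipate: writing $J=\sum_{i=1}^k I^{s-i(n+1)}w^i$, one has $I^s\cap J=\sum_{i=1}^k w^i\bigl[I^{s-i(n+1)}\cap (I^s:w^i)\bigr]$, and the key claim $I^{s-i(n+1)}\cap (I^s:w^i)=\mm^iI^{s-i(n+1)}$ follows from a pure degree count: if $M\in I^{s-i(n+1)}$ is a monomial with $Mw^i\in I^s$, then $\deg M\ge 2s-(2n+1)i$, and writing $M=NL$ with $N$ a minimal generator of $I^{s-i(n+1)}$, of degree $2(s-i(n+1))$, forces $\deg L\ge i$, i.e. $M\in\mm^iI^{s-i(n+1)}$; the reverse inclusion is again $\mm w\subseteq I^{n+1}$. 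Finally the terms with $i\ge 2$ collapse into the $i=1$ term since $\mm w\subseteq I^{n+1}$ gives $w^i\mm^iI^{s-i(n+1)}\subseteq w\mm I^{s-(n+1)}$. So the whole lemma reduces to the identity $\mm w\subseteq I^{n+1}$ plus degree bookkeeping; as written, your proposal is not a proof and its guess about where the difficulty lies is mistaken.
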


\begin{proof} From the description of $J$, we have
\begin{align}
I^s \cap J = \sum_{i=1}^k I^s \cap I^{s-i(n+1)}w^i = \sum_{i=1}^k w^i[I^{s-i(n+1)} \cap (I^s:w^i)]. \label{eq.capj}
\end{align}

We first claim that for any $1 \le i \le k$,
\begin{align}
I^{s-i(n+1)} \cap (I^s : w^i) = I^{s-i(n+1)}\mm^i. \label{eq.capi}
\end{align}

Indeed, it is easy to see that for any $x_j \in V(C_{2n+1})$, $x_jw = (x_jx_{j+1})(x_{j+2}x_{j+3}) \cdots (x_{j-1}x_j) \in I^{n+1}$. Thus,
$$I^{s-i(n+1)}\mm^i w^i \subseteq I^{s-i(n+1)}(I^{n+1})^i = I^s.$$
That is, $I^{s-i(n+1)}\mm^i \subseteq I^s : w^i$. This implies that
$$I^{s-i(n+1)}\mm^i \subseteq I^{s-i(n+1)} \cap (I^s : w^i).$$

Conversely, let $M \in I^{s-i(n+1)} \cap (I^s : w^i)$ be any monomial. Then, $Mw^i \in I^s$. This, in particular, implies that $\deg M \ge 2s - (2n+1)i$. On the other hand, since $M \in I^{s-i(n+1)}$, we can write $M = NL$, where $N$ is a minimal generator of $I^{s-i(n+1)}$ and, thus, is of degree $2(s-i(n+1))$. It follows that $\deg L \ge i$; that is, $L \in \mm^i$. Therefore, $M \in I^{s-i(n+1)}\mm^i$. The equality (\ref{eq.capi}) is established.

Observe further that since $x_jw \in I^{n+1}$ for any $x_j \in V(C_{2n+1})$, we have $w\mm \subseteq I^{n+1}$. Thus,
$$I^{s-(n+1)}w\mm \supseteq I^{s-2(n+1)}w^2\mm^2 \supseteq \cdots \supseteq I^{s-k(n+1)}w^k\mm^k.$$
Hence, combining with (\ref{eq.capj}) and (\ref{eq.capi}), we get $I^s \cap J = w\mm I^{s-(n+1)}.$
\end{proof}

Recall that the unique cycle $C$ in a unicyclic graph $G$ is said to be \emph{dominating} if for any $y \in V(G) \setminus V(C)$, there exists $x \in V(C)$ such that $\{x,y\} \in E(G)$; that is, $G$ is a cycle with zero or more \emph{whiskers} attached to its vertices.

\begin{remark} \label{rmk.whisker}
With the same line of arguments, Lemma \ref{lem.IsJ} can be extended to hold for any  unicyclic graph with a dominating odd cycle.
\end{remark}

Our last main result of the paper is stated as follows.

\begin{theorem} \label{thm.regsymbolic}
Let $I = I(C_{2n+1})$ be the edge ideal of an odd cycle. Then, for any $s \ge 1$, we have
$$\reg I^{(s)} = \reg I^s.$$
\end{theorem}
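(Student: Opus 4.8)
The plan is to prove the two inequalities $\reg I^{(s)} \ge \reg I^s$ and $\reg I^{(s)} \le \reg I^s$ separately. For $s \le n$ there is nothing to prove, since Lemma \ref{lem.n+1}(1) gives $I^{(s)} = I^s$; so assume $s = k(n+1) + r$ with $k \ge 1$ and $0 \le r \le n$, and write $I^{(s)} = I^s + J$ as above, where $J = wI^{(s-(n+1))}$ and $w = x_1\cdots x_{2n+1}$. The lower bound is the easy direction: by Theorem \ref{thm.genbound} we have $\reg I^{(s)} \ge 2s + \nu(C_{2n+1}) - 1 = 2s + n - 1$ (since the induced matching number of an odd cycle $C_{2n+1}$ is $n$), and by the formula for $\reg I^s$ recalled in the introduction this quantity equals $\reg I^s$ for $s \ge 2$ when $2n+1 \equiv 1 \pmod 3$; in the remaining congruence classes one still has $\reg I^s = 2s + \lceil (2n+1)/3\rceil - 1 \ge 2s+n-1$ only with a gap, so for the lower bound I would instead invoke the known fact that $I^s \subseteq I^{(s)}$ gives no regularity comparison directly — rather I would use the containment and the short exact sequence below to transfer the known value of $\reg I^s$ upward. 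Concretely, the cleanest route for $\ge$ is: $I^s \subseteq I^{(s)}$, so it suffices to bound $\reg I^{(s)}$ from below by $\reg I^s$ via the exact sequence, which also drives the upper bound, so I treat both at once.

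The core of the argument is the short exact sequence of $R$-modules
\begin{equation*}
0 \longrightarrow I^s \cap J \longrightarrow I^s \oplus J \longrightarrow I^s + J = I^{(s)} \longrightarrow 0,
\end{equation*}
together with the identifications $I^s \cap J = w\mm I^{s-(n+1)}$ (Lemma \ref{lem.IsJ}) and $J = wI^{(s-(n+1))}$. Since $w$ has degree $2n+1$, we get $\reg J = \reg I^{(s-(n+1))} + (2n+1)$ and $\reg(I^s \cap J) = \reg(\mm I^{s-(n+1)}) + (2n+1)$. Now $\mm I^{s-(n+1)}$ is the degree-shift-by-one truncation-type module; in fact $\reg(\mm I^{s-(n+1)}) \le \reg I^{s-(n+1)} + 1$ always, and for edge ideals of cycles one can pin it down: $\mm I^t$ and $I^t$ have the same regularity up to the expected shift, which I would verify using the exact sequence $0 \to \mm I^t \to I^t \to I^t/\mm I^t \to 0$ where $I^t/\mm I^t$ is a direct sum of copies of $\mathbb K$ placed in degree $2t$ (one for each minimal generator of $I^t$), hence has regularity $2t$; combined with $\reg I^t = 2t + \lceil(2n+1)/3\rceil - 1 \ge 2t$ this yields $\reg \mm I^t = \reg I^t + 1$ when $2n+1 \ge 4$, i.e. $n \ge 2$ (the case $n=1$, $C_3$, being a complete intersection handled separately by \cite[Lemma 4.4]{BHT}).

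The induction is on $s$ (equivalently on $k$), the base cases $s \le n$ being Lemma \ref{lem.n+1}(1). Feeding the regularity values into the long exact sequence in local cohomology (or directly into the standard regularity estimate for a short exact sequence $0 \to A \to B \to C \to 0$, namely $\reg C \le \max\{\reg B, \reg A - 1\}$ and $\reg A \le \max\{\reg B, \reg C + 1\}$), I would show $\reg I^{(s)} \le \max\{\reg(I^s \oplus J),\ \reg(I^s\cap J) - 1\}$. Using the inductive hypothesis $\reg I^{(s-(n+1))} = \reg I^{s-(n+1)}$ and the known formula $\reg I^t = 2t + \lceil(2n+1)/3\rceil - 1$ for $t \ge 2$ (with the small-$t$ adjustments), both $\reg J$ and $\reg(I^s\cap J) - 1$ evaluate to at most $\reg I^s$ — the shift by $2n+1$ is exactly compensated by the drop of $n+1$ in the power and the extra factor of $2$ per edge, so $2(s-(n+1)) + (2n+1) = 2s - 1 < 2s$. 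Hence $\reg I^{(s)} \le \max\{\reg I^s, \reg I^s\} = \reg I^s$. For the reverse inequality, the same sequence gives $\reg I^s \le \max\{\reg I^{(s)}, \reg(I^s \cap J) + 1\}$; since $\reg(I^s \cap J) + 1 = 2s + \lceil(2n+1)/3\rceil - 1 = \reg I^s$ would be circular, I instead note $\reg(I^s \cap J) + 1 \le \reg I^s$ strictly (from the degree count $2s-1 < 2s$ above unless the $\lceil \cdot \rceil$ term dominates, which requires a separate check), so if $\reg I^{(s)} < \reg I^s$ we'd contradict $I^s \subseteq I^{(s)}$ together with the surjection, forcing $\reg I^{(s)} \ge \reg I^s$.

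The main obstacle I anticipate is the bookkeeping at the boundary cases: when $s - (n+1) \le 1$ the formula $\reg I^t = 2t + \lceil(2n+1)/3\rceil - 1$ fails ($\reg I = \lceil(2n+1)/3\rceil$, not $1 + \lceil(2n+1)/3\rceil$, and $\reg I^{(0)} = 0$), so the induction's base layer and the first inductive step must be checked by hand, and the comparison $\reg(I^s \cap J) - 1 \le \reg I^s$ must be re-examined there. A secondary subtlety is justifying $\reg(\mm I^t) = \reg I^t + 1$ rather than merely $\le$; this needs the observation that the socle-type quotient $I^t/\mm I^t$ sits in degree exactly $2t$ and that $\reg I^t > 2t$ for $t$ large (true once $2n+1 \ge 4$), so that the regularity of $\mm I^t$ is genuinely forced up by one and not absorbed. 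Finally one must confirm $\nu(C_{2n+1}) = n$ and that Theorem \ref{thm.genbound} indeed gives the sharp lower bound $2s + n - 1$, matching $\reg I^s$ only in one residue class — which is why I route the lower bound through the exact sequence and the containment $I^s \subseteq I^{(s)}$ rather than through Theorem \ref{thm.genbound} directly. The corollary $\reg I^{(s)} = 2s + \lfloor (2n+1)/3 \rfloor - 1 = 2s + \lceil(2n+1)/3\rceil - 1$ for $s \ge 2$ then follows immediately from the known formula for $\reg I^s$.
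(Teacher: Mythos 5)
Your upper-bound argument is essentially the paper's: both rest on Theorem \ref{thm.decomposition}, Lemma \ref{lem.IsJ}, the estimate $\reg(\mm I^{t}) \le \reg I^{t}+1$, and induction on $s$ in steps of $n+1$; the paper runs it through $0 \to I^s \to I^{(s)} \to I^{(s)}/I^s \to 0$ with $I^{(s)}/I^s \cong \bigl(I^{(s-(n+1))}/\mm I^{s-(n+1)}\bigr)(-(2n+1))$, while you use the Mayer--Vietoris sequence $0 \to I^s\cap J \to I^s\oplus J \to I^{(s)} \to 0$; either packaging works, modulo the boundary cases $s-(n+1)\le 1$ that you correctly flag.

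The genuine gap is in your lower bound, and it originates in a factual error: the induced matching number of $C_{2n+1}$ is $\nu(C_{2n+1})=\lfloor (2n+1)/3\rfloor$, not $n$ (you are quoting the matching number $\beta$; e.g.\ $\nu(C_5)=1$). With the correct value, Theorem \ref{thm.genbound} gives $\reg I^{(s)} \ge 2s+\lfloor(2n+1)/3\rfloor-1$, which by \cite[Theorem 1.2]{BHT} equals $\reg I^s$ for every $s\ge 2$, in every residue class; this one-line argument is exactly the paper's lower bound, and there is no mismatch to route around. Your substitute route does not close: from the Mayer--Vietoris sequence, forcing $\reg I^s \le \reg I^{(s)}$ requires the strict inequality $\reg(I^s\cap J) < \reg I^s$, which your estimates never establish (they give only $\le$), and the auxiliary claim you lean on, $\reg(\mm I^{t})=\reg I^{t}+1$, is false in the relevant range: since $I^{t}/\mm I^{t}$ is concentrated in degree $2t$ while $\reg I^{t}\ge 2t+1$ whenever $t\ge 2$ and $\nu\ge 2$, the sequence $0\to \mm I^{t}\to I^{t}\to I^{t}/\mm I^{t}\to 0$ forces $\reg(\mm I^{t})=\reg I^{t}$, not $\reg I^{t}+1$ (with the corrected value your strictness would in fact hold for $\nu\ge 2$, but for $\nu=1$, i.e.\ $C_3$ and $C_5$, one does get $\reg(\mm I^t)=2t+1$ and then $\reg(I^s\cap J)=\reg I^s$, so the needed strict inequality genuinely fails to follow and that case is left unproved). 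Finally, the ceiling/floor slip propagates: $2s+\lfloor(2n+1)/3\rfloor-1=2s+\lceil(2n+1)/3\rceil-1$ only when $3$ divides $2n+1$, so the closing identity in your last paragraph is wrong as stated; the correct formula in Corollary \ref{cor.regsymbolic} uses the floor.
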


\begin{proof} The statement is clear if $s = 1$. Suppose that $s \ge 2$. Let $\nu = \lfloor \frac{2n+1}{3}\rfloor$ be the induced matching number of $C_{2n+1}$. By \cite[Theorem 1.2]{BHT}, we have
$$\reg I^s = 2s + \nu - 1.$$
It follows from Theorem \ref{thm.genbound} that for any $s \ge 1$,
$\reg I^{(s)} \ge 2s+\nu-1.$
Thus, it suffices to show that for any $s \ge 2$,
\begin{align}
\reg I^{(s)} \le 2s+\nu-1. \label{eq.upper}
\end{align}

Indeed, the statement is true for $s \le n$ by Lemma \ref{lem.n+1} and \cite[Theorem 1.2]{BHT}. Suppose that $s \ge n+1$. By Lemma \ref{lem.IsJ}, we have
\begin{align}
I^{(s)}/I^s \simeq J/(I^s \cap J) = wI^{(s-(n+1))}/w\mm I^{s-(n+1)} \simeq \dfrac{I^{(s-(n+1))}}{\mm I^{s-(n+1)}}(-(2n+1)). \label{eq.dropw}
\end{align}

By the induction hypothesis, we have
$$\reg I^{(s-(n+1))} \le 2s-2(n+1)+\nu,$$
where the equality can only happen if $s-(n+1) = 1$. Moreover, it follows from \cite[Theorem 2.4]{CH} that
$$\reg \mm I^{s-(n+1)} \le 1 + \reg I^{s-(n+1)} \le 2s-2(n+1)+\nu+1,$$
where the equality again can only happen if $s-(n+1) = 1$. Thus, by considering the short exact sequence
$$0 \rightarrow \mm I^{s-(n+1)} \rightarrow I^{(s-(n+1))} \rightarrow I^{(s-(n+1))}/\mm I^{s-(n+1)}\rightarrow 0,$$
we obtain
$$\reg I^{(s-(n+1))}/\mm I^{s-(n+1)} \le 2s-2(n+1)+\nu.$$
This, together with (\ref{eq.dropw}), implies that
$$\reg I^{(s)}/I^s = \reg I^{(s-(n+1))}/\mm I^{s-(n+1)} + (2n+1) \le 2s+\nu-1.$$
The conclusion now follows by considering the short exact sequence
$$0 \rightarrow I^s \rightarrow I^{(s)} \rightarrow I^{(s)}/I^s \rightarrow 0.$$
The theorem is proved.
\end{proof}

\begin{corollary} \label{cor.regsymbolic}
Let $I = I(C_{2n+1})$ be the edge ideal of an odd cycle, and let $\nu = \lfloor \frac{2n+1}{3}\rfloor$ be its induced matching number. Then, for any $s \ge 2$, we have
$$\reg I^{(s)} = 2s + \nu-1.$$
\end{corollary}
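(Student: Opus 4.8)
\textbf{Proof proposal for Corollary \ref{cor.regsymbolic}.}
The plan is to simply combine Theorem \ref{thm.regsymbolic} with the known regularity formula for ordinary powers of the edge ideal of a cycle. By Theorem \ref{thm.regsymbolic}, for every $s \ge 1$ we have $\reg I^{(s)} = \reg I^s$; in particular this holds for all $s \ge 2$. So the task reduces to recalling the value of $\reg I^s$ when $I = I(C_{2n+1})$ and $s \ge 2$.

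Next I would invoke \cite[Theorem 1.2]{BHT}, which computes the regularity of powers of edge ideals of cycles: for $I = I(C_{2n+1})$ and any $s \ge 2$, one has $\reg I^s = 2s + \nu - 1$, where $\nu = \lfloor \frac{2n+1}{3} \rfloor$ is the induced matching number of $C_{2n+1}$ (this identification of $\nu(C_{2n+1})$ with $\lfloor \frac{2n+1}{3}\rfloor$ is standard and is already used in the proof of Theorem \ref{thm.regsymbolic}). Substituting this into the equality $\reg I^{(s)} = \reg I^s$ gives $\reg I^{(s)} = 2s + \nu - 1$ for all $s \ge 2$, which is exactly the claimed formula.

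There is essentially no obstacle here: the corollary is a direct bookkeeping consequence of the theorem just proved together with a citation. The only point worth a sentence of care is the restriction to $s \ge 2$, which is needed because the formula $\reg I^s = 2s + \nu - 1$ from \cite{BHT} is stated for powers $s \ge 2$ (the case $s = 1$ gives $\reg I(C_{2n+1}) = \nu + 1$ when $2n+1 \not\equiv 0 \bmod 3$ and $\nu + 2$ otherwise, which does not fit the single uniform expression). With that caveat noted, the proof is complete in two lines.
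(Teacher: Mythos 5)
Your proof is correct and is exactly the paper's argument: the corollary is obtained by combining Theorem \ref{thm.regsymbolic} with the formula $\reg I^s = 2s+\nu-1$ for $s \ge 2$ from \cite[Theorem 1.2]{BHT}. (Only your parenthetical aside about $s=1$ is slightly off: for a cycle $C_m$ one has $\reg I(C_m) = \nu+2$ precisely when $m \equiv 2 \pmod 3$, not when $m \equiv 0 \pmod 3$; this does not affect the proof.)
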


\begin{proof} The conclusion follows immediately from Theorem \ref{thm.regsymbolic} and \cite[Theorem 1.2]{BHT}.
\end{proof}

\begin{remark} By Remark \ref{rmk.whisker}, it follows from \cite[Theorem 5.4]{ABS} and the proof of Theorem \ref{thm.regsymbolic} that, for a unicyclic graph $G$ with a dominating odd cycle, we have
$$\reg I(G)^{(s)} = 2s+\nu(G)-1, \ \forall \ s \in \NN.$$
\end{remark}

We end our paper with a number of identities involving symbolic and ordinary powers of edge ideals of unicyclic graphs containing a dominating odd cycle. For such a graph $G$ with a dominating cycle $C_{2n+1} = (x_1, \dots, x_{2n+1})$, we shall explore further relationships between $I^s, I^{(s)}$ and $x_1 \cdots x_{2n+1}$. Particularly, we shall compute the colon ideal $I^s : I^{(s)}$.
To do so, we start with the following observation.

\begin{lemma} \label{lem.colonbyk}
Let $G$ be a unicyclic graph with a dominating cycle $C_{2n+1} = (x_1, \dots, x_{2n+1})$.
Let $V(G)=\{x_1,\dots,x_{2n+1},y_1,\dots,y_t\}$ and $I=I(G)$.
Then, for any $k \ge 1$, we have
$$I^{kn+k}:(x_1\cdots x_{2n+1})^k=(x_1,\dots,x_{2n+1},y_1,\dots,y_t)^k.$$
\end{lemma}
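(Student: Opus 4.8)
The key point is that the right-hand side equals $\mm^k$ where $\mm = (x_1,\dots,x_{2n+1},y_1,\dots,y_t)$ is the maximal homogeneous ideal, so the claim is $I^{k(n+1)} : w^k = \mm^k$, with $w = x_1\cdots x_{2n+1}$. I would prove the two containments separately.

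For the containment $\mm^k \subseteq I^{k(n+1)} : w^k$, it suffices to show that $z\,w \in I^{n+1}$ for every variable $z \in V(G)$ and then multiply $k$ such relations together. For $z = x_j$ a cycle vertex, this is exactly the identity already used in the proof of Lemma \ref{lem.IsJ}: $x_j w = (x_jx_{j+1})(x_{j+2}x_{j+3})\cdots(x_{j-1}x_j)$ is a product of $n+1$ edges of $C_{2n+1}$ (indices mod $2n+1$). For $z = y_\ell$ a whisker vertex, here is where the \emph{dominating} hypothesis enters: $y_\ell$ is adjacent to some cycle vertex $x_j$, so $y_\ell x_j \in I$; pairing off the remaining $2n$ cycle vertices $x_{j+1},\dots,x_{j-1}$ into $n$ edges of the cycle gives $y_\ell w = (y_\ell x_j)(x_{j+1}x_{j+2})\cdots(x_{j-2}x_{j-1}) \in I^{n+1}$. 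Hence $z w \in I^{n+1}$ for all variables $z$, so $\mm^k w^k \subseteq (I^{n+1})^k = I^{k(n+1)}$, giving $\mm^k \subseteq I^{k(n+1)} : w^k$.

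For the reverse containment $I^{k(n+1)} : w^k \subseteq \mm^k$, I would argue by degree, mimicking the degree count in Lemma \ref{lem.IsJ}. Let $M$ be a monomial with $M w^k \in I^{k(n+1)}$. Every minimal generator of $I^{k(n+1)}$ has degree $2k(n+1)$, so $\deg(M w^k) \ge 2k(n+1)$, hence $\deg M \ge 2k(n+1) - k(2n+1) = k$. A monomial of degree $\ge k$ lies in $\mm^k$, so $M \in \mm^k$. Since $I^{k(n+1)} : w^k$ is a monomial ideal, this checks membership on monomial generators and we are done.

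**Main obstacle.** There is no serious obstacle here; the lemma is essentially bookkeeping built on the single identity $zw \in I^{n+1}$. The one place requiring care is the whisker case of the first containment, where one must actually invoke that the cycle is dominating (so every $y_\ell$ has a neighbor on the cycle) — without that hypothesis the statement can fail, since a variable $y_\ell$ far from the cycle need not satisfy $y_\ell w \in I^{n+1}$. Everything else is the same degree argument already appearing in the proof of Lemma \ref{lem.IsJ}, now run with exponent $k(n+1)$ instead of the mixed exponents there.
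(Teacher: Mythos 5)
Your proof is correct and follows essentially the same route as the paper's: the containment $\mm^k \subseteq I^{k(n+1)}:w^k$ via the identities $x_jw\in I^{n+1}$ and $y_\ell w\in I^{n+1}$ (the latter using the dominating hypothesis), and the reverse containment by the same degree count showing any monomial in the colon has degree at least $k$. No gaps; nothing further needed.
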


\begin{proof} Observe first that $x_i(x_1 \cdots x_{2n+1}) = (x_ix_{i+1}) (x_{i+2}x_{i+3}) \cdots (x_{i-1}x_i) \in I^{n+1}$ (with addition in the subscripts performed modulo $2n + 1$), and if $\{x_i,y_j\} \in E(G)$ then $y_j(x_1 \cdots x_{2n+1}) = (y_jx_i) (x_{i+1}x_{i+2}) \cdots (x_{i-2}x_{i-1}) \in I^{n+1}$. Thus,
$$I^{kn+k}:(x_1\cdots x_{2n+1})^k\supseteq(x_1,\ldots,x_{2n+1},y_1,\ldots,y_t)^k.$$

Observe further, by a simple counting of degrees, that the minimal generators of $I^{kn+k}:(x_1\cdots x_{2n+1})^k$ have degrees at least $k$. Therefore,
$$I^{kn+k}:(x_1\cdots x_{2n+1})^k\subseteq (x_1,\ldots,x_{2n+1},y_1,\ldots,y_t)^k.$$
Hence, we have $I^{kn+k}:(x_1\cdots x_{2n+1})^k=(x_1,\dots,x_{2n+1},y_1,\dots,y_t)^k.$
\end{proof}

\begin{example} \label{ex.5Cycle} Consider the graph $G$ with $V(G)=\{x_1,x_2,x_3,x_4,x_5,y,z\}$ and
$$E(G)=\{x_1x_2, x_2x_3,x_3x_4,x_4x_5,x_5x_1,x_1y,yz\}.$$
Then, $I^{6}:(x_1\cdots x_5)^2\neq (x _1,x_2,x_3,x_4,x_5,y,z)^2$. Thus, Lemma \ref{lem.colonbyk} does not necessarily hold for all unicyclic graphs in general.
\end{example}

In general, for unicyclic graphs with an odd cycle $C_{2n+1}$, we shall compute the colon ideal $I^{n+1} : I^{(n+1)}$.

\begin{notation}
Let $G$ be a connected unicyclic graph with a unique cycle $C_{2n+1}$. Then $G$ is obtained by attaching to $C_{2n+1}$ rooted trees $\{T_1, \dots, T_m\}$ whose roots are
at the vertices $x_{i_1},\cdots, x_{i_m} \in V(C_{2n+1})$. Define
$$\Gamma(G)=\bigcup_{j=1}^mN_{T_j}(x_{i_j}).$$
\end{notation}

\begin{proposition} Let $G$ be a connected unicyclic graph with a unique cycle $C_{2n+1}$
and let $I=I(G)$. Let $F = \bigcup_{i=1}^m T_i$. Then
$$I^{n+1}:I^{(n+1)}=(x_1,\dots,x_{2n+1})+(y ~\big|~ y\in \Gamma(G))+I(F \setminus \Gamma(G)).$$
\end{proposition}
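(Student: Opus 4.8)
The inclusion $I^{(n+1)} \supseteq I^{n+1}$ always holds, and by Lemma~\ref{lem.n+1} we have $I^{(n+1)} = I^{n+1} + (w)$ where $w = x_1 \cdots x_{2n+1}$. Hence $I^{n+1} : I^{(n+1)} = I^{n+1} : (I^{n+1} + (w)) = I^{n+1} : (w)$, since $I^{n+1} : I^{n+1} = R$. So the whole problem reduces to computing the colon ideal $I^{n+1} : (x_1 \cdots x_{2n+1})$, which is a monomial ideal; I would phrase the proposition this way first. Write $J = (x_1, \dots, x_{2n+1}) + (y \mid y \in \Gamma(G)) + I(F \setminus \Gamma(G))$ for the claimed answer.

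\textbf{Containment $J \subseteq I^{n+1}:(w)$.} This is the ``easy'' direction and is checked generator by generator, exactly in the spirit of Lemma~\ref{lem.colonbyk}. For $x_i \in V(C_{2n+1})$, the identity $x_i w = (x_ix_{i+1})(x_{i+2}x_{i+3})\cdots(x_{i-1}x_i) \in I^{n+1}$ does it. For $y \in \Gamma(G)$, $y$ is adjacent to some vertex $x_{i_j}$ on the cycle, so $yw = (yx_{i_j}) \cdot (\text{product of } n \text{ disjoint edges covering } V(C_{2n+1})\setminus\{x_{i_j}\}) \in I^{n+1}$. For an edge $e = ab \in I(F \setminus \Gamma(G))$: $e$ lies in some tree $T_j$ but is not incident to the root $x_{i_j}$; I would argue that $ab \cdot w \in I^{n+1}$ by writing $w = x_{i_j} \cdot (\text{product of } n \text{ edges of } C_{2n+1})$ and combining $x_{i_j}$ with the path-edge in $T_j$ leading toward $\{a,b\}$ — more carefully, one needs $ab \cdot x_{i_j} \in I^2$ together with enough edges, but since $F \setminus \Gamma(G)$ edges are ``far'' from the cycle the cleanest route is to observe $ab \cdot x_{i_j}x_{i_j+1}\cdots$ reorganizes into $n+1$ edges; this bookkeeping is routine but must be done with care.

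\textbf{Containment $I^{n+1}:(w) \subseteq J$.} This is the main obstacle. Take a monomial $M$ with $Mw \in I^{n+1}$; I must show $M \in J$. A degree count as in Lemma~\ref{lem.colonbyk} shows $\deg M \geq 1$, but degree alone is not enough here (Example~\ref{ex.5Cycle} shows the naive analogue fails), so the argument must use the graph structure. I would suppose toward a contradiction that $M \notin J$, i.e.\ $M$ is not divisible by any $x_i$, any $y \in \Gamma(G)$, or any edge of $F \setminus \Gamma(G)$; equivalently $\supp(M)$ avoids $V(C_{2n+1})$, avoids $\Gamma(G)$, and induces a graph with no edges among the vertices at distance $\geq 2$ from the cycle. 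Then $Mw$ has the form $(x_1\cdots x_{2n+1}) \cdot M'$ where $M'$ is supported on an independent-ish set of tree vertices far from the cycle, and I would show such a monomial cannot lie in $I^{n+1}$: any $(n+1)$-fold product of edges dividing $Mw$ uses up the $2n+1$ cycle variables, which can be covered by at most $n$ disjoint edges of $C_{2n+1}$ plus possibly one edge $x_i y$ with $y \in \Gamma(G)$; after removing a matching of $C_{2n+1}$-edges and at most one whisker edge at the root, at most one cycle vertex and its contribution remain, and there are not enough remaining edge-factors supported on $\supp(M')$ because those vertices form no edges (being outside $\Gamma(G)$ and pairwise non-adjacent). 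Making ``not enough edge-factors'' precise — tracking exactly how the $n+1$ edges can distribute over the cycle variables versus the tree variables — is where the real work lies; I expect to need a short combinatorial lemma saying that if $e_1 \cdots e_{n+1}$ divides $x_1\cdots x_{2n+1}\cdot M'$ then at least $n+1 - 1 = n$ of the $e_j$ are edges of $C_{2n+1}$ and the possible exceptional edge must be a whisker at a cycle vertex, hence divisible by a variable in $V(C_{2n+1}) \cup \Gamma(G)$, forcing $M$ into $J$ after all.

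\textbf{Assembly.} Once both containments are in place, combine them with the reduction $I^{n+1}:I^{(n+1)} = I^{n+1}:(w)$ to conclude. I would also remark that the disconnected case reduces to the connected one via \cite[Theorem 3.4]{HNTT} as in the earlier proofs, though the proposition as stated already assumes $G$ connected.
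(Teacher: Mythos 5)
Your proposal is correct in outline, and its first two steps coincide with the paper's: the reduction $I^{n+1}:I^{(n+1)}=I^{n+1}:(w)$, where $w=x_1\cdots x_{2n+1}$, via Lemma~\ref{lem.n+1}, and a generator-by-generator verification of the easy containment (where, incidentally, the forest-edge case needs no bookkeeping at all: every edge $e$ of $G$ satisfies $e\cdot w=e\cdot(x_1x_2)(x_3x_4)\cdots(x_{2n-1}x_{2n})\cdot x_{2n+1}\in I^{n+1}$). For the hard containment you take a genuinely different route. The paper bounds $I^{n+1}\subseteq I(C_{2n+1})^{n+1}+(f_1,\dots,f_r)$, where $f_1,\dots,f_r$ are the forest edges, colons this larger ideal first by $x_2\cdots x_{2n+1}$ and then by $x_1$, and reads off the answer from the elementary description of each $(f_i:x_2\cdots x_{2n+1})$ together with $I(C_{2n+1})^{n+1}:(x_1\cdots x_{2n+1})=(x_1,\dots,x_{2n+1})$, which is the $k=1$ case of Lemma~\ref{lem.colonbyk}. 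Your direct support analysis also works and closes quickly, but not via the auxiliary lemma you forecast: it is not true that at least $n$ of the edge factors must be cycle edges, nor must the exceptional factor be a whisker (it could be a forest edge meeting $\Gamma(G)$, or an edge of $F\setminus\Gamma(G)$ dividing $M$). What you actually need is only this: if no $x_i$ divides $M$, the cycle variables occur in $Mw$ with total degree $2n+1$, so the $n+1$ factors cannot all be cycle edges (their product would have degree $2n+2$ in those variables); and any factor with an endpoint off the cycle has all its off-cycle endpoints dividing $M$ (they do not occur in $w$), and those endpoints either include a vertex of $\Gamma(G)$ or form an edge of $F\setminus\Gamma(G)$ (an edge of $F$ incident to a root has its other endpoint in $\Gamma(G)$), so $M$ lies in your ideal $J$ in every case. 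With that fix your argument is complete and entirely self-contained; the paper's colon-algebra computation is shorter and recycles Lemma~\ref{lem.colonbyk}, while yours makes explicit the combinatorial reason each of the three blocks of generators appears.
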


\begin{proof} It follows from Lemma \ref{lem.n+1} that
$$I^{n+1}:I^{(n+1)}=I^{n+1}:(I^{n+1}+(x_1\cdots x_{2n+1}))=I^{n+1}:(x_1\cdots x_{2n+1}).$$
Thus, it remains to show that
$$I^{n+1}:(x_1\cdots x_{2n+1})=(x_1,\dots ,x_{2n+1})+(y ~\big|~ y\in \Gamma(G))+I(F\setminus \Gamma(G)).$$

Let $E(F)=\{f_1,\ldots, f_r\}$ be the edges of $F$. It is easy to see that
$$I^{n+1}=(I(C_{2n+1})+(f_1,\ldots,f_r))^{n+1} \subseteq I(C_{2n+1})^{n+1}+(f_1,\ldots,f_r).$$
Let $M=x_2\cdots x_{2n+1}$ and
$$J=(I(C_{2n+1})^{n+1}, f_1,\ldots,f_r): M = (I(C_{2n+1})^{n+1}: M)+(f_1: M)+\cdots +(f_r: M).$$

Observe that there are two possibilities for $(f_i:M)$. If $f_i\cap \supp(M) =\emptyset$, then $(f_i: M)=(f_i)$. On the other hand, if
$f_i\cap \supp(M) \not= \emptyset$ then $(f_i:M)$ is generated by the only vertex in $f_i \setminus \supp(M) = f_i \cap \Gamma(G) \setminus N_G[x_1]$.

Now, it can be seen that
\begin{eqnarray*}
I^{n+1}:(x_1\cdots x_{2n+1})&=&(I^{n+1}: M):x_1\\
& \subseteq & J:x_1\\
&=& (I(C_{2n+1})^{n+1}:(x_1 \cdots x_{2n+1})) + [(f_1, \dots, f_r) :M]:x_1 \\
&=&(x_1,\dots ,x_{2n+1})+(y ~\big|~ y\in \Gamma(G))+I(F\setminus \Gamma(G)).
\end{eqnarray*}
On the other hand, it is easy to see that $(x_1,\dots ,x_{2n+1})+(y ~\big|~ y\in \Gamma(G))+I(F\backslash \Gamma(G))\subseteq I^{n+1}:(x_1\cdots x_{2n+1})$. Hence, the result follows.
\end{proof}

For unicyclic graphs with a dominating odd cycle, we shall compute the colon ideal $I^s : I^{(s)}$ for all $s \ge n+1$ (obviously, for $1 \le s \le n$, $I^s = I^{(s)}$ by Lemma \ref{lem.n+1}).

\begin{proposition} \label{pro.Colon}
Let $G$ be a unicyclic graph with a dominating cycle $C_{2n+1} = (x_1, \dots, x_{2n+1})$.
Let $V(G) = \{x_1, \dots, x_{2n+1}, y_1, \dots, y_t\}$ and let $I = I(G)$.
Then, for any $s \ge n+1$ and $k = \lfloor\frac{s}{n+1}\rfloor$, we have
$$I^{s}:I^{(s)}=(x_1,\ldots,x_{2n+1},y_1,\ldots,y_t)^k.$$
\end{proposition}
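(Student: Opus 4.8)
The plan is to prove the two inclusions separately, using Theorem~\ref{thm.decomposition} to write $I^{(s)}$ explicitly and Lemma~\ref{lem.colonbyk} as the key engine. Write $s = k(n+1) + r$ with $0 \le r \le n$, so by Theorem~\ref{thm.decomposition} we have $I^{(s)} = \sum_{t=0}^k I^{s-t(n+1)} w^t$ where $w = x_1 \cdots x_{2n+1}$, and set $\mathfrak{N} = (x_1, \dots, x_{2n+1}, y_1, \dots, y_t)$.

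For the inclusion $I^s : I^{(s)} \supseteq \mathfrak{N}^k$: since $I^{(s)}$ is generated by the terms $I^{s-t(n+1)} w^t$ for $0 \le t \le k$, it suffices to show $\mathfrak{N}^k \cdot I^{s-t(n+1)} w^t \subseteq I^s$ for each such $t$. The case $t=0$ is trivial since $\mathfrak{N}^k I^s \subseteq I^s$. For $t \ge 1$, I would argue $\mathfrak{N}^k w^t \subseteq \mathfrak{N}^{k-t} (\mathfrak{N}^t w^t) \subseteq \mathfrak{N}^{k-t} I^{t(n+1)}$, where the last containment is exactly the $\supseteq$ direction of Lemma~\ref{lem.colonbyk} applied with $t$ in place of $k$ (which only used that the cycle is dominating, so $x_i w, y_j w \in I^{n+1}$). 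Combining, $\mathfrak{N}^k I^{s-t(n+1)} w^t \subseteq \mathfrak{N}^{k-t} I^{t(n+1)} I^{s-t(n+1)} = \mathfrak{N}^{k-t} I^s \subseteq I^s$. Hence $\mathfrak{N}^k \subseteq I^s : I^{(s)}$.

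For the reverse inclusion $I^s : I^{(s)} \subseteq \mathfrak{N}^k$: note $I^s : I^{(s)} \subseteq I^s : (I^{s-k(n+1)} w^k) = I^s : (I^r w^k)$. Pick any fixed minimal generator $g$ of $I^r$ (degree $2r$); then $I^s : (I^r w^k) \subseteq I^s : (g w^k) = (I^s : g) : w^k$. Here I expect $I^s : g$ to be closely controlled — since $g \in I^r$, one has $I^{s-r} \subseteq I^s : g$, and a degree count shows any monomial $M \in I^s : g$ satisfies $\deg(M) \ge 2s - 2r = 2(s-r)$; combined with the fact (analogous to the proof of Lemma~\ref{lem.IsJ}) that a generator of $I^s$ factoring through $g$ forces the cofactor into $I^{s-r}$, I would conclude $I^s : g = I^{s-r}$, or at least that $(I^s : g) : w^k \subseteq (I^{s-r} \cdot(\text{something})) : w^k$ lands inside $I^{(n+1)k} : w^k = \mathfrak{N}^k$ by degree reasons. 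More directly: any monomial $M$ with $M w^k I^r \subseteq I^s$ must have $M w^k \in I^s : I^r$, and since $w^k$ already ``uses up'' at most $I^{s-r}$-worth after multiplying by $w^k$... this is where care is needed — the cleanest route is the degree argument: $M w^k g \in I^s$ implies $\deg M + k(2n+1) + 2r \ge 2s = 2k(n+1) + 2r$, giving $\deg M \ge k$, and then to upgrade ``$\deg M \ge k$'' to ``$M \in \mathfrak{N}^k$'' one uses that $M \in R$ is a monomial in the variables of $R$, so $\deg M \ge k$ is equivalent to $M \in \mathfrak{N}^k$.

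\textbf{Main obstacle.} The delicate point is the reverse inclusion, specifically justifying that testing against the single product $I^r w^k$ (rather than all the generators $I^{s-t(n+1)}w^t$) loses nothing, and then that the degree bound $\deg M \ge k$ is actually tight and sufficient — one must rule out a monomial of degree $\ge k$ that nonetheless fails $M w^k I^r \subseteq I^s$ for a subtler reason, and conversely confirm such a monomial genuinely lies in the colon. I anticipate the smoothest argument mirrors Lemma~\ref{lem.colonbyk}'s proof almost verbatim: show $\supseteq$ by the explicit factorizations $x_i w^k, y_j w^k \in I^{k(n+1)}$ and the decomposition of $I^{(s)}$, and show $\subseteq$ by reducing modulo a generator of $I^r$ and invoking a pure degree count, with the observation that for monomial ideals ``degree $\ge k$'' literally means ``membership in $\mathfrak{N}^k$'' doing the final work.
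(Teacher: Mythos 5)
Your proposal is correct, and it reaches the conclusion by a route that is genuinely different in structure from the paper's. The paper computes the colon in one string of equalities: since $I^{(s)}=\sum_{i=0}^k I^{s-i(n+1)}w^i$ by Theorem \ref{thm.decomposition}, the colon distributes as an intersection $\bigcap_{i=0}^k \big(I^s : I^{s-i(n+1)}w^i\big)$, each term is rewritten as $I^{i(n+1)}:w^i$ using the identity $I^s:I^{s-i(n+1)}=I^{i(n+1)}$ from \cite[Lemma 2.12]{MMV}, and then Lemma \ref{lem.colonbyk} is applied to each term, the intersection of the $\mathfrak{N}^i$ being $\mathfrak{N}^k$. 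You instead prove the two inclusions separately: the containment $\mathfrak{N}^k\subseteq I^s:I^{(s)}$ term by term from the decomposition via $\mathfrak{N}w\subseteq I^{n+1}$ (valid exactly because the cycle is dominating, the same observation that drives Lemma \ref{lem.colonbyk}), and the reverse containment by coloning against the single element $gw^k$ with $g$ a degree-$2r$ generator of $I^r$ and using a pure degree count, plus the fact that a monomial of degree at least $k$ lies in $\mathfrak{N}^k$. This buys you a self-contained argument that does not need \cite[Lemma 2.12]{MMV} nor the colon-of-a-sum step, at the cost of redoing the easy half of Lemma \ref{lem.colonbyk} inline; the paper's version is shorter given the cited results. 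Two small remarks: the worries in your ``main obstacle'' paragraph are unfounded — testing against the single element $gw^k$ only needs to give the upper bound, since the lower bound is handled independently by your first inclusion, so nothing is ``lost''; and the parenthetical hope that $I^s:g=I^{s-r}$ for a single generator $g$ of $I^r$ is false in general for graphs with odd cycles (e.g.\ for $C_5$ one has $x_3x_5\in I^2:(x_1x_2)$ but $x_3x_5\notin I$), though your final argument never uses it, so this is harmless.
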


\begin{proof} By Theorem \ref{thm.decomposition} and \cite[Lemma 2.12]{MMV}, we have
\begin{eqnarray*}
I^s:I^{(s)}&= &I^s:(\sum_{i=0}^k I^{s-i(n+1)}(x_1\cdots x_{2n+1})^i)\\
&=& \bigcap_{i=0}^k(I^s: I^{s-i(n+1)}(x_1\cdots x_{2n+1})^i)\\
&=& \bigcap_{i=0}^k(I^{i(n+1)}:(x_1\cdots x_{2n+1})^i)\\
&=& (x_1,\ldots,x_{2n+1},y_1,\ldots,y_s)^k.
\end{eqnarray*}
Here, the last equality follows from Lemma \ref{lem.colonbyk}.
\end{proof}

\begin{example} The conclusion of Proposition \ref{pro.Colon} does not necessarily hold for all unicyclic graphs. Let $G$ be the graph from Example \ref{ex.5Cycle}. For $s=3$ we have $k=\lfloor\frac33\rfloor=1$. Then $$I^3:I^{(3)}=(x_1,x_2,x_3,x_4,x_5,y)\neq(x_1,x_2,x_3,x_4,x_5,y,z).$$
\end{example}

\begin{proposition} \label{pro.Ism}
Let $G$ be a unicyclic graph with a dominating cycle $C_{2n+1} = (x_1, \dots, x_{2n+1})$.
Let $V(G)=\{x_1,\ldots,x_{2n+1},y_1,\ldots,y_t\}$ and let $I=I(G)$.
Then, for any $s \ge n+1$, we have
$$I^{s}=I^{(s)}\cap (x_1,\ldots,x_{2n+1},y_1,\ldots,y_t)^{2s}.$$
\end{proposition}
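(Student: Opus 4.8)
The plan is to prove the two inclusions separately. The inclusion $I^s \subseteq I^{(s)} \cap (x_1,\dots,x_{2n+1},y_1,\dots,y_t)^{2s}$ is immediate: $I^s \subseteq I^{(s)}$ always holds, and every minimal generator of $I^s$ is a product of $s$ edges, hence has degree $2s$, so lies in $\mm_G^{2s}$ where $\mm_G = (x_1,\dots,x_{2n+1},y_1,\dots,y_t)$ is the graded maximal ideal. The content of the statement is therefore the reverse inclusion.

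For the reverse inclusion, I would use the decomposition of Theorem \ref{thm.decomposition}. Write $s = k(n+1)+r$ with $0 \le r \le n$, so that $I^{(s)} = \sum_{t=0}^k I^{s-t(n+1)} w^t$ where $w = x_1\cdots x_{2n+1}$. Let $M \in I^{(s)} \cap \mm_G^{2s}$ be a monomial. By the binomial-type expansion, $M$ is divisible by some minimal generator of $I^{s-i(n+1)} w^i$ for some $0 \le i \le k$; that is, $M = N \cdot w^i \cdot P$ where $N$ is a minimal generator of $I^{s-i(n+1)}$ (of degree $2(s-i(n+1))$) and $P$ is a monomial. If $i = 0$ then $M \in I^s$ already. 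If $i \ge 1$, then $\deg(N w^i) = 2(s-i(n+1)) + i(2n+1) = 2s - i$, so the hypothesis $\deg M \ge 2s$ forces $\deg P \ge i$. Now I invoke Lemma \ref{lem.colonbyk}: since $P \in \mm_G^i$ (as $\deg P \ge i$ and $P$ is a monomial in the variables of $R$), we have $w^i P \in I^{i(n+1)}$, hence $M = N \cdot (w^i P) \in I^{s-i(n+1)} \cdot I^{i(n+1)} = I^s$. This handles every term in the decomposition and gives $I^{(s)} \cap \mm_G^{2s} \subseteq I^s$.

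The one subtlety to be careful about — and the step I expect to be the main obstacle, though it is more of a bookkeeping point than a genuine difficulty — is the first line of the expansion argument: from $M \in \sum_{t=0}^k I^{s-t(n+1)}w^t$ one must extract a single index $i$ and a clean factorization $M = N w^i P$ with $N$ a minimal generator of $I^{s-i(n+1)}$. For monomial ideals this is standard (a monomial lies in a sum of monomial ideals iff it lies in one of the summands, and a monomial in a product $A \cdot B$ of monomial ideals is divisible by (a product of) a minimal generator of each factor), so it should be stated carefully but carries no real risk. The other point requiring the dominating-cycle hypothesis is precisely the use of Lemma \ref{lem.colonbyk}, which is where the whiskers-only structure is needed (cf. Example \ref{ex.5Cycle}); without it the identity $w^i P \in I^{i(n+1)}$ for $P \in \mm_G^i$ can fail, as the colon $I^{kn+k} : w^k$ need not equal $\mm_G^k$. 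I would remark that, by Proposition \ref{pro.Colon}, this reverse inclusion is in fact equivalent to the colon computation $I^s : I^{(s)} = \mm_G^k$ combined with the observation that $I^{(s)} \subseteq \mm_G^{2s-k}$, but the direct monomial argument above is shorter and self-contained.
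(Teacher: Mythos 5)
Your proposal is correct and follows essentially the same route as the paper's proof: decompose $I^{(s)}$ via Theorem \ref{thm.decomposition}, reduce to a single summand $I^{s-i(n+1)}w^i$ by monomiality, use the degree count $\deg(Nw^i)=2s-i$ to get the extra factor into $\mm^i$, and conclude with the containment $w^i\mm^i\subseteq I^{i(n+1)}$ (the easy inclusion of Lemma \ref{lem.colonbyk}, which is where the dominating-cycle hypothesis enters, exactly as in the paper).
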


\begin{proof} Set $\mm = (x_1,\ldots,x_{2n+1},y_1,\ldots,y_t)$, $u=x_1\ldots x_{2n+1}$ and $k=\lfloor\frac{s}{n+1}\rfloor$. Then
\begin{eqnarray*}
I^{(s)}\cap \mm^{2s}&=& (I^s+I^{s-(n+1)}u+\cdots+I^{s-k(n+1)}u^k)\cap \mm^{2s}\\
&=& I^s \cap \mm^{2s}+I^{s-(n+1)}u \cap \mm^{2s}+\cdots +I^{s-k(n+1)}u^k\cap \mm^{2s}.
\end{eqnarray*}

For any $1\leq i\leq k$, we consider $I^{s-i(n+1)}u^i\cap \mm^{2s}$. For any generator $u^iw\in I^{s-i(n+1)}u^i \cap \mm^{2s}$, we have $w=w'w''$, where $w'$ is a product of $s-i(n+1)$ edges. It is easy to see that, since $\deg (u^iw) \ge 2s$, we have $\deg (w'') \ge i$; that is, $w''\in \mm^{i}$.
This, in particular, implies that $u^iw'' \in I^{i(n+1)}$. Hence, $u^iw\in I^{s}$. The conclusion follows.
\end{proof}

\begin{example} The conclusion of Proposition \ref{pro.Ism} does not necessarily hold for all unicyclic graphs in general. Let $G$ be the graph in Example \ref{ex.5Cycle}. Any minimal vertex cover of $G$ requires three vertices from the $5$-cycle, so for each minimal prime $\mathfrak{p}$ of $I = I(G)$, there exists a monomial in $\mathfrak{p}^3$ that divides $x_1 x_2 x_3 x_4 x_5$. Therefore $x_1 x_2 x_3 x_4 x_5 \in I^{(3)}$, and $x_1 x_2 x_3 x_4 x_5 z \in I^{(3)} \cap (x_1, \ldots, x_5,y,z)^6$. However, because the only edge containing $z$ is $yz$, we have $x_1 x_2 x_3 x_4 x_5 z \not\in I^3$, and therefore $I^3 \neq I^{(3)} \cap (x_1,\ldots,x_5, y, z)^6.$
\end{example}


\end{document}